\theoremstyle{plain} \textwidth=36pc \textheight=51pc
\newtheorem{theorem}{Theorem}[section]
\newtheorem{lemma}[theorem]{Lemma}
\newtheorem{proposition}[theorem]{Proposition}
\newtheorem{corollary}[theorem]{Corollary}
\theoremstyle{definition}
\numberwithin{equation}{section}
\newcommand{\K}{\bold{k}}
\newcommand{\Char}{\mathop{\mathrm{Char}}}
\newcommand{\rank}{\mathop{\mathrm{rank}}}
\begin{document}

\title[Universal enveloping algebras of Poisson Ore extensions]{Universal enveloping algebras of Poisson Ore extensions}

\author{Jiafeng L\"u, Xingting Wang and Guangbin Zhuang}

\address{(L\"u) Department of Mathematics, Zhejiang Normal University, Jinhua, Zhejiang 321004, P.R. China}
\email{jiafenglv@gmail.com}

\address{(Wang) Department of Mathematics,
University of Washington, Seattle, Washington 98195, USA}
\email{xingting@uw.edu}

\address{(Zhuang) Department of Mathematics,
University of Southern California, Los Angeles 90089-2532, USA}
\email{gzhuang@usc.edu}

\keywords{Poisson algebra, Universal enveloping algebra, Ore extension}

\subjclass[2010]{17B63,17B35,16S10}

\thanks{The first author is supported by National Natural Science Foundation of China (No. 11001245, 11271335 and 11101288). The second author is partially supported by U.~S.~National Science Foundation [DMS0855743].}

\begin{abstract}
We prove that the universal enveloping algebra of a Poisson-Ore extension is a length two iterated Ore extension of the original universal enveloping algebra. As consequences, we observe certain ring-theoretic invariants of the universal enveloping algebras that are preserved under iterated Poisson-Ore extensions. We apply our results to iterated quadratic Poisson algebras arising from semiclassical limits of quantized coordinate rings and a family of graded Poisson algebras of Poisson structures of rank at most two.
\end{abstract}
\maketitle

\section*{Introduction}
As an analogue of the classical enveloping algebras of Lie algebras, the notion of Poisson universal enveloping algebra was first introduced in \cite{Oh1} to illustrate the equivalence of the following two categories:
\begin{align*}
\mathbf{PMod}(R)\equiv \mathbf{Mod}(R^e),
\end{align*}
which translates the representations of a Poisson algebra $R$ into the representations of a noncommutative algebra $R^e$, called the Poisson universal enveloping algebra of $R$. Natural questions arise about the structures between $R$ and $R^e$, to which our main result is stated regarding Poisson-Ore extensions in the sense of \cite{Oh3}.
\begin{theorem}\label{Thm:1}
Let $R$ be a Poisson algebra, and $R^e$ be its universal enveloping algebra. 
\begin{enumerate}
\item For any Poisson-Ore extension $A$ of $R$, the universal enveloping algebra $A^e$ is a right double Ore extension of $R^e$. Moreover, it is a length two iterated Ore extension.
\item For any iterated Poisson-Ore extension $A$ of $R$, the universal enveloping algebra $A^e$ is an iterated Ore extension of $R^e$ of double length. 
\end{enumerate}
\end{theorem}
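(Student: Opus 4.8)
The plan is to obtain part (2) from part (1) by induction on the number of steps in the tower. If $A$ is obtained from $R$ by $n$ successive Poisson-Ore extensions, write $A=A'[x;\alpha,\delta]_p$ with $A'$ obtained in $n-1$ steps; by induction $(A')^e$ is an iterated Ore extension of $R^e$ of length $2(n-1)$, and part (1) adjoins two further variables over $(A')^e$, producing a tower of length $2n$. So the content is entirely in part (1), and I concentrate on a single Poisson-Ore extension $A=R[x;\alpha,\delta]_p$, whose bracket is determined by $\{x,r\}=\alpha(r)x+\delta(r)$ for $r\in R$, with $\alpha$ a Poisson derivation and $\delta$ a Poisson $\alpha$-derivation in the sense of \cite{Oh3}. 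Throughout I write $m\colon R\to R^e$ and $h\colon R\to R^e$ for the two structure maps ($m$ an algebra map, $h$ a Lie map), subject to the standard relations $m_{ab}=m_am_b$, $[h_a,h_b]=h_{\{a,b\}}$, $h_{ab}=m_ah_b+m_bh_a$ and $[h_a,m_b]=m_{\{a,b\}}$.

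The first step is to identify the Ore data. Put $X:=m_x$ and $Y:=h_x$ in $A^e$. Feeding the bracket $\{x,r\}=\alpha(r)x+\delta(r)$ into the four relations above, a direct computation gives, for every $r\in R$,
\begin{align*}
Xm_r &= m_rX, & Xh_r &= (h_r+m_{\alpha(r)})X+m_{\delta(r)},\\
Ym_r &= m_rY+m_{\alpha(r)}X+m_{\delta(r)}, & Yh_r &= (h_r+m_{\alpha(r)})Y+Xh_{\alpha(r)}+h_{\delta(r)},
\end{align*}
together with $YX=XY$ (since $\{x,x\}=0$). I would record these as the defining relations of a right double Ore extension $R^e[X,Y;\sigma,\delta,\tau]$ of $R^e$: the displayed formulas show that moving $X$ past $R^e$ never produces a $Y$, so the homomorphism $\sigma\colon R^e\to\Mat[2](R^e)$ is lower triangular ($\sigma_{12}=0$), while $YX=XY$ forces the trivial parameters $P=\{1,0\}$ and $\tau=0$. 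By the Zhang--Zhang reduction, a right double Ore extension with lower-triangular $\sigma$ and these parameters is exactly a length-two iterated Ore extension, which yields the ``moreover'' clause once the double extension structure is verified.

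Concretely I would build the tower as $A^e\cong R^e[X;\sigma_1,\delta_1][Y;\sigma_2,\delta_2]$, where on the inner extension $\sigma_1$ fixes each $m_r$ and sends $h_r\mapsto h_r+m_{\alpha(r)}$, while $\delta_1$ kills $m_r$ and sends $h_r\mapsto m_{\delta(r)}$; and over $B:=R^e[X;\sigma_1,\delta_1]$ the data $\sigma_2$ restricts to $\sigma_1$ on $R^e$ with $\sigma_2(X)=X$, while $\delta_2(m_r)=m_{\alpha(r)}X+m_{\delta(r)}$, $\delta_2(h_r)=Xh_{\alpha(r)}+h_{\delta(r)}$ and $\delta_2(X)=0$. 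The crux of the argument, and the step I expect to be the main obstacle, is checking that these data are well defined: that $\sigma_1$ extends to an algebra automorphism of $R^e$ (with inverse $h_r\mapsto h_r-m_{\alpha(r)}$) and $\delta_1$ to a $\sigma_1$-derivation, and likewise for $(\sigma_2,\delta_2)$ over $B$. This amounts to verifying compatibility with every defining relation of $R^e$, and it is precisely here that the Poisson-Ore axioms are consumed: for instance, checking $\sigma_1([h_r,h_s])=[\sigma_1h_r,\sigma_1h_s]$ reduces, after expanding with $[h_a,m_b]=m_{\{a,b\}}$, to the identity $m_{\alpha(\{r,s\})}=m_{\{\alpha(r),s\}+\{r,\alpha(s)\}}$, i.e.\ to $\alpha$ being a Poisson derivation; the twisted Leibniz identity for $\delta$ enters analogously when checking that $\delta_1,\delta_2$ respect $h_{ab}=m_ah_b+m_bh_a$ and $[h_a,h_b]=h_{\{a,b\}}$, and the relations $\sigma_1\delta_1=\delta_1\sigma_1$ needed for $\sigma_2,\delta_2$ to descend to $B$ are short computations.

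Finally I would upgrade this to a genuine isomorphism via the universal property of $A^e$. Writing $E:=R^e[X;\sigma_1,\delta_1][Y;\sigma_2,\delta_2]$, the relations of $E$ hold in $A^e$ by the previous display, giving an algebra map $\psi\colon E\to A^e$ fixing $R^e$ and sending $X\mapsto m_x$, $Y\mapsto h_x$. For the reverse direction I define $M\colon A\to E$ as the algebra homomorphism with $M|_R$ the structure map into $R^e\subseteq E$ and $M_x=X$ (well defined since $A$ is commutative and $X$ centralizes $m(R)$), and $H\colon A\to E$ linearly by $H|_R=h$, $H_x=Y$, forced on each $Rx^n$ by $H_{ab}=M_aH_b+M_bH_a$; verifying that $(M,H)$ satisfies the remaining enveloping relations (again via the computations above) produces $\phi\colon A^e\to E$ by the universal property. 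Since $\phi$ and $\psi$ are mutually inverse on the generators $m_r,h_r,X,Y$, they are inverse isomorphisms, and because $E$ is free as a left $R^e$-module on $\{X^iY^j\}$ the structure map $R^e\to A^e$ is in particular injective, completing part (1).
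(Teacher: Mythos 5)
Your proposal is correct and follows essentially the same route as the paper: the Ore data you write down (the lower-triangular $\sigma$ with $\sigma_{11}=\sigma_{22}$, the $\sigma$-derivation, $YX=XY$, and the explicit two-step tower) coincides exactly with the paper's DE-data and its Proposition on the iterated Ore structure, and your endgame via the universal property in both directions matches the paper's Lemma on property $\mathbf{P}$ and the final universal-property argument. The only notable difference is cosmetic: where you propose to verify multiplicativity of $\sigma_1,\sigma_2$ by checking each defining relation of $R^e$ by hand, the paper gets $\sigma$ for free by observing that the pair of maps $f,g\colon R\to M_2(R^e)$ makes $(M_2(R^e),f,g)$ a triple with property $\mathbf{P}$ and invoking the universal property of $R^e$, and it cites Carvalho--Lopes--Matczuk for the passage from right double Ore extension to length-two iterated Ore extension rather than exhibiting the tower directly.
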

Our proof strategy is first to show that $A^e$ is a right double Ore extension of $R^e$ (Subsection \ref{DOE}), and then apply \cite[Theorem 2.4]{CLM:11} to show that it is indeed a length two iterated Ore extension. As consequences, we have the following corollary by using the properties of Ore extensions.
\begin{corollary}\label{Cor:1}
Let $R$ be a Poisson algebra, and $A$ be an iterated Poisson-Ore extension of $R$. Then the universal enveloping algebra $A^e$ inherits the following properties from $R^e$:
\begin{enumerate}
\item being a domain;
\item being Noetherian;
\item having finite global dimension;
\item having finite Krull dimension;
\item being twisted Calabi-Yau.
\end{enumerate}
In particular, let $R$ be a connected graded Poisson algebra, and $A$ be a graded iterated Poisson-Ore extension of $R$. Then $R^e$ and $A^e$ are connected graded algebras, and $A^e$ aslo inherits the following properties from $R^e$:
\begin{enumerate}
\item[(6)] being Artin-Schelter regular;
\item[(7)] being Koszul provided that $A$ is quadratic.
\end{enumerate}
\end{corollary}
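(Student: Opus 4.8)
The plan is to deduce every assertion from Theorem \ref{Thm:1}(2), which presents $A^e$ as an iterated Ore extension of $R^e$ whose length is twice that of the iterated Poisson-Ore extension $A$. Each of the listed invariants is preserved under a single Ore extension $S\mapsto S[x;\sigma,\delta]$, so an induction on the number of steps reduces the corollary to the one-step statements. The one structural input I need to extract from the proof of Theorem \ref{Thm:1} is that every $\sigma$ occurring in the iteration is an \emph{automorphism} of the relevant base ring, not merely an injective endomorphism; this is precisely what the right double Ore extension structure together with \cite[Theorem 2.4]{CLM:11} provides.

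Granting this, parts (1)--(5) follow from classical facts about Ore extensions. For $S$ a domain and $\sigma$ injective, $S[x;\sigma,\delta]$ is again a domain by the usual leading-coefficient argument; for $S$ Noetherian and $\sigma$ an automorphism, $S[x;\sigma,\delta]$ is Noetherian by the Hilbert basis theorem for Ore extensions; and the bounds $\operatorname{gldim}S[x;\sigma,\delta]\le \operatorname{gldim}S+1$ and $\operatorname{Kdim}S[x;\sigma,\delta]\le \operatorname{Kdim}S+1$ show that finiteness of global and Krull dimension is inherited. For the twisted Calabi-Yau property I would invoke the preservation of twisted Calabi-Yau algebras under Ore extensions along an automorphism, tracking the Nakayama automorphism through each extension. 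Taking $S=R^e$ and iterating then yields (1)--(5).

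For the graded statements, the first task is to check that the Ore extensions are compatible with the grading. Giving the generators of $R^e$ their natural degrees and each Ore variable the appropriate positive degree makes $R^e$ connected graded and exhibits $A^e$ as an iterated \emph{graded} Ore extension; connectedness passes from $A$ to $A^e$. Artin-Schelter regularity is preserved by graded Ore extensions along an automorphism, which gives (6). When $A$ is quadratic the adjoined Poisson variables lie in degree one, hence so do the corresponding Ore variables in $A^e$; since a degree-one graded Ore extension of a Koszul algebra is again Koszul, this gives (7).

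The main obstacle I anticipate lies not in any individual citation but in the bookkeeping: verifying that the Ore extensions furnished by Theorem \ref{Thm:1} are genuinely graded with the Ore variables in the correct degrees (degree one in the quadratic case), and that the twisting maps remain automorphisms at every stage of the iteration. Once these two points are secured, each of the seven properties reduces to an application of the corresponding preservation theorem for a single Ore extension.
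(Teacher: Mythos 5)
Your proposal is correct and follows essentially the same route as the paper: the paper likewise reduces everything to Theorem \ref{Thm:1}(2) together with the standard preservation theorems for Ore extensions (with the twisting maps being automorphisms, as established in Proposition \ref{Prop:U}), citing \cite{LWW:13} for the twisted Calabi-Yau and Artin-Schelter regular cases (via the equivalence of the two notions for connected graded algebras) and \cite[Corollary 1.3]{Phan} for Koszulity. Your version merely spells out the induction and the grading bookkeeping that the paper leaves implicit.
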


For applications, we consider iterated quadratic Poisson algebras arising from semiclassical limits of quantized coordinate rings. In fact, these quantum algebras are all iterated Ore extensions of the polynomial algebra with one variable. Hence they give rise to iterated Poisson-Ore extensions of the same algebra with trivial Poisson bracket through the semiclassical limit process, see reference \cite[Proposition 4.1]{LL:13}. Also, we consider one family of graded Poisson algebras, whose Poisson bracket is determined by a skew-symmetric matrix. Our classification shows that their Poisson structures have rank at most two in the sense of \cite[\S 3]{BG}, and the isomorphism classes consist of a discrete class and a parametric family, where the latter one is a Poisson-Ore extension of a free Poisson algebra.

\subsection*{Acknowledgments} The authors first want to give their sincere gratitudes to James Zhang for introducing them this project. They also want to thank Yanhong Bao, Jiwei He, Xuefeng Mao, Cris Negron and James Zhang for many valuable discussions and suggestions on this paper.

\section{Preliminaries}
Throughout, we work over a base field $\K$. We use $R$ to denote a commutative algebra, and it is said to be connected graded if $R=\K\oplus R_1\oplus R_2\oplus \cdots$ such that $R_iR_j\subseteq R_{i+j}$ for all $i,j\ge 0$. 
\subsection{Poisson algebras}
A Poisson algebra is a commutative algebra $R$ with a Lie bracket $\{-,-\}$ such that
\[
\{ab,c\}=a\{b,c\}+\{a,c\}b
\]
for all $a,b,c\in R$, called Leibniz rule. Also a graded Poisson algebra is defined to be a graded commutative algebra with a degree $0$ graded Lie bracket satisfying the Leibniz rule.

\subsection{Universal enveloping algebras of Poisson algebras}\label{UniversalProperty}
The universal enveloping algebra of a Poisson algebra $R$ is determined by the following universal property: let $(A, m, h)$ be a triple, which has property $\mathbf{P}$ described as
\begin{enumerate}
\item[(P1)] $A$ is an algebra and $m: R\to A$ is an algebra map;
\item[(P2)] $h: (R,\{-,-\})\to A_L$ is a Lie algebra map;
\item[(P3)] $m_{\{r,s\}}=h_rm_s-m_sh_r$, and
\item[(P4)] $h_{rs}=m_rh_s+m_sh_r$, for all $r,s\in R$,
\end{enumerate}
then $(A, m, h)$ is called the universal enveloping algebra of $R$ if for any other triple $(B,f,g)$ satisfying property $\mathbf{P}$, there exists a unique algebra map $\phi: A\to B$ such that $f=\phi m$ and $g=\phi h$, see reference \cite{Oh1}. Sometimes, we say that the following diagram commutes with respect to property $\mathbf{P}$: 
\[
\xymatrix{
R\ar[rr]^-{m}_-{h}\ar[dr]^-{f}_-{g} && A\ar@{-->}[dl]^-{\exists ! \phi}\\
& B&
}.
\]
The universal enveloping algebra of $R$ is usually denoted by $R^e$, which can be constructed explicitly. We follow \cite[\S 2]{U}. Let $V=R\oplus R$ with two inclusions of $R$ denoted by $m$ and $h$. Denote by $R^e$ the tensor algebra $T(V)$ modulo the following relations:
\begin{align*}
m_{rs}=m_rm_s,\tag{R1}\\
h_{\{r,s\}}=h_rh_s-h_sh_r,\\
h_{rs}=m_rh_s+m_sh_r,\\
m_{\{r,s\}}=h_rm_s-m_sh_r=[h_r, m_s],\\
m_1=1,
\end{align*}
for all $r,s\in R$. It is clear that $m, h$ induce two linear maps from $R$ to $R^e$, where we keep the same notations. Then the universal enveloping algebra of $R$ is given by the triple $(R^e,m,h)$. Sometimes, we only call $R^e$ the universal enveloping algebra of $R$ without specifying the two linear maps $m,h$. Note that $R^e$ is generated by $m_R$ and $h_R$ as an algebra. Furthermore, if $R$ is a graded Poisson algebra, then so is $R^e$ by setting $\deg m_r=\deg h_r=\deg r$ for any homogenous element $r\in R$. 

\subsection{Poisson-Ore extensions}\label{subsection:POE}
Let $R$ be a Poisson algebra. A linear map $\alpha: R\to R$ is said to be a Poisson derivation if it satisfies
\begin{enumerate}
\item $\alpha(rs)=\alpha(r)s+r\alpha(s)$;
\item $\alpha(\{r,s\})=\{\alpha(r),s\}+\{r,\alpha(s)\}$,
\end{enumerate}
for all $r,s\in R$. Let $\alpha$ be a Poisson derivation of $R$, then a linear map $\delta: R\to R$ is called a Poisson $\alpha$-derivation if it satisfies
\begin{enumerate}
\item $\delta(rs)=\delta(r)s+r\delta(s)$;
\item $\delta(\{r,s\})=\{\delta(r),s\}+\{r,\delta(s)\}+\alpha(r)\delta(s)-\delta(r)\alpha(s)$,
\end{enumerate}
for all $r,s\in R$. 
\begin{theorem}\cite[Theorem 1.1]{Oh3}\label{POE}
Let $\alpha$ and $\delta$ be two linear maps of a Poisson algebra $R$. Then the polynomial algebra $A=R[x]$ is a Poisson algebra with Poisson bracket extending the Poisson bracket of $R$ such that
\[
\{x,r\}=\alpha(r)x+\delta(r),
\]
for all $r\in R$ if and only if $\alpha$ is a Poisson derivation of $R$ and $\delta$ is a Poisson $\alpha$-derivation of $R$.
\end{theorem}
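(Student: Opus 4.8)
The plan is to treat a Poisson bracket on the commutative algebra $A=R[x]$ as a skew-symmetric bilinear map that is a derivation in each argument (the Leibniz rule) and satisfies the Jacobi identity, and then to see how each of these three requirements translates into a condition on $\alpha$ and $\delta$. First I would observe that, once we decree that the bracket restricts to the given one on $R$, that $\{x,x\}=0$ by skew-symmetry, and that $\{x,r\}=\alpha(r)x+\delta(r)$, the Leibniz rule forces a unique extension of $\{-,-\}$ to all of $A$, since $A$ is generated as an algebra by $R$ together with $x$. Thus the only issue is whether this forced extension is a consistent biderivation and whether it satisfies the Jacobi identity.

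For consistency, the critical compatibility is the Leibniz rule applied to a product inside $R$: expanding $\{x,rs\}=\{x,r\}s+r\{x,s\}$ and comparing the coefficient of $x$ with the coefficient of $1$ yields exactly $\alpha(rs)=\alpha(r)s+r\alpha(s)$ and $\delta(rs)=\delta(r)s+r\delta(s)$, that is, the first (ordinary derivation) axioms for $\alpha$ and for $\delta$. This settles one half of the equivalence, in both directions, for those two axioms.

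For the Jacobi identity I would use the standard fact that, for a skew-symmetric biderivation, the Jacobiator $J(a,b,c)=\{a,\{b,c\}\}+\{b,\{c,a\}\}+\{c,\{a,b\}\}$ is itself a derivation in each of its three arguments; hence $J$ vanishes identically as soon as it vanishes on a set of algebra generators. Here the generators are the elements of $R$ and the single element $x$. Triples lying entirely in $R$ pose no problem since $R$ is already Poisson, and any triple containing $x$ at least twice is annihilated by skew-symmetry, so the whole content collapses to the single identity $J(x,r,s)=0$ for $r,s\in R$. Substituting $\{x,r\}=\alpha(r)x+\delta(r)$ and $\{r,x\}=-\alpha(r)x-\delta(r)$ and repeatedly applying the Leibniz rule, $J(x,r,s)$ becomes a polynomial of degree one in $x$; I expect its $x$-coefficient to be $\alpha(\{r,s\})-\{\alpha(r),s\}-\{r,\alpha(s)\}$ and its constant term to be $\delta(\{r,s\})-\{\delta(r),s\}-\{r,\delta(s)\}-\alpha(r)\delta(s)+\delta(r)\alpha(s)$. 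Setting these two coefficients to zero is precisely the second axiom for $\alpha$ (a Poisson derivation) and the second axiom for $\delta$ (a Poisson $\alpha$-derivation), respectively.

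Putting the pieces together gives both implications simultaneously: the bracket described is a Poisson bracket on $A$ if and only if the Leibniz-from-products identities and the single Jacobi identity $J(x,r,s)=0$ hold, and these are equivalent to the two axioms for $\alpha$ together with the two axioms for $\delta$. I anticipate the main technical obstacle to be the bookkeeping in the expansion of $J(x,r,s)$, namely keeping the $x$-linear and $x$-free parts separate while correctly tracking the signs introduced by $\{r,x\}=-\{x,r\}$ and by the Leibniz rule acting on products such as $\alpha(s)x$. The only conceptual point requiring care is the justification that verifying Jacobi on generators suffices, which rests on the triderivation property of $J$.
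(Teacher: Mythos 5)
The paper offers no proof of this statement---it is imported verbatim from \cite[Theorem 1.1]{Oh3}---so there is nothing internal to compare against; judged on its own, your argument is correct and is the standard proof of that result. The Leibniz rule applied to $\{x,rs\}$ does yield exactly the ordinary-derivation axioms for $\alpha$ and $\delta$, the triderivation property of the Jacobiator legitimately reduces Jacobi to the single identity $J(x,r,s)=0$, and your predicted $x$-coefficient and constant term of $J(x,r,s)$ are both exactly right, matching the two remaining axioms. The only point worth tightening is the ``if'' direction: beyond consistency of $\{x,rs\}$, one should note that the forced extension is the explicit formula \eqref{e1}, which is automatically a well-defined bilinear map on the free $R$-module $R[x]$ and is then checked to be a biderivation---routine, and you do flag this issue.
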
 
The algebra $A$ endowed with the Poisson bracket from Theorem \ref{POE} is denoted by $A=R[x;\alpha,\delta]_P$ and called Poisson-Ore extension of $R$.

\subsection{Double Ore extensions}\label{DOE}
We follow \cite[Definition 1.3, Lemma 1.10, Proposition 1.11]{ZZ:08}. The right double Ore extension of an associative algebra $A$  is adding two generators $y_1,y_2$ to $A$, subject to
\begin{enumerate}
\item[(D1)] 
$\begin{pmatrix}
y_1\\
y_2
\end{pmatrix}
a=\sigma(a)\begin{pmatrix}
y_1\\
y_2
\end{pmatrix}+\eta(a)$, for all $a\in A$, where $\sigma=\begin{pmatrix}
\sigma_{11}& \sigma_{12}\\
\sigma_{21}& \sigma_{22}
\end{pmatrix}: A\to M_2(A)$ is an algebra map and $\eta=\begin{pmatrix} \eta_1\\ \eta_2\end{pmatrix}: A\to A^{\oplus 2}$ is a $\sigma$-derivation satisfying $\eta(ab)=\sigma(a)\eta(b)+\eta(a)b$, for all $a,b\in A$;
\item[(D2)] $y_2y_1=p_{12}y_1y_2+p_{11}y_1^2+\tau_1y_1+\tau_2y_2+\tau_0$, where $P:=\{p_{12}, p_{11}\}$ is a set of elements of $\K$ and $\tau:=\{\tau_0,\tau_1,\tau_2\}$ is a set of elements of $A$;
\item[(D3)] $6$ compatible conditions:
\begin{align*}
\sigma_{21}\sigma_{11}+p_{11}\sigma_{22}\sigma_{11}&=p_{11}\sigma_{11}^2+p_{11}^2\sigma_{12}\sigma_{11}+p_{12}\sigma_{11}\sigma_{21}+p_{11}p_{12}\sigma_{12}\sigma_{21},\\
\sigma_{21}\sigma_{12}+p_{12}\sigma_{22}\sigma_{11}&=p_{11}\sigma_{11}\sigma_{12}+p_{11}p_{12}\sigma_{12}\sigma_{11}+p_{12}\sigma_{11}\sigma_{22}+p_{12}^2\sigma_{12}\sigma_{21},\\
\sigma_{22}\sigma_{12}&=p_{11}\sigma_{12}^2+p_{12}\sigma_{12}\sigma_{22},\\
\sigma_{20}\sigma_{11}+\sigma_{21}\sigma_{10}+\sigma_{22}\sigma_{11}\tau_1&=p_{11}(\sigma_{10}\sigma_{11}+\sigma_{11}\sigma_{10}+\tau_1\sigma_{12}\sigma_{11})+p_{12}(\sigma_{10}\sigma_{21}+\sigma_{11}\sigma_{20}+\tau_1\sigma_{12}\sigma_{21})\\ &\quad+\tau_1\sigma_{11}+\tau_2\sigma_{21},\\
\sigma_{20}\sigma_{12}+\sigma_{22}\sigma_{10}+\sigma_{22}\sigma_{11}\tau_2&=p_{11}(\sigma_{10}\sigma_{12}+\sigma_{12}\sigma_{10}+\tau_2\sigma_{12}\sigma_{11})+p_{12}(\sigma_{10}\sigma_{22}+\sigma_{12}\sigma_{20}+\tau_2\sigma_{12}\sigma_{21})\\& \quad+\tau_1\sigma_{12}+\tau_2\sigma_{22},\\
\sigma_{20}\sigma_{10}+\sigma_{22}\sigma_{11}\tau_0&=p_{11}(\sigma_{10}^2+\tau_0\sigma_{12}\sigma_{11})+p_{12}(\sigma_{10}\sigma_{20}+\tau_0\sigma_{12}\sigma_{21})+\tau_1\sigma_{10}+\tau_2\sigma_{20}+\tau_0Id,
\end{align*}  

where $\sigma_{i0}=\eta_i$ for $i=1,2$.
\end{enumerate}

We denote by $A_P[y_1,y_2;\sigma,\eta,\tau]$ the right double Ore extension of $A$ associated to the DE-data $\{P,\sigma,\eta,\tau\}$. By symmetry, we have the notion of left double Ore extension, and we say that the extension is a double Ore extension if it can be obtained by adding two generators via both right and left double Ore extensions.

\section{Universal enveloping algebras of Poisson-Ore extensions}
In this section, we will show that the universal enveloping algebra of a Poisson-Ore extension is a length two iterated Ore extension of the original universal enveloping algebra. Let $R$ be a Poisson algebra, and we consider its Poisson-Ore extension $R[x;\alpha,\delta]_P$ for some Poisson derivation $\alpha$ of $R$ and some Poisson $\alpha$-derivation $\delta$ of $R$. The Poisson bracket in $R[x;\alpha,\delta]_P$ can be explicitly given by
\begin{align}\label{e1}
\left\{rx^p,sx^q\right\}=\left(\{r,s\}+pr\alpha(s)-q\alpha(r)s\right)x^{p+q}+\left(pr\delta(s)-q\delta(r)s\right)x^{p+q-1},
\end{align}
for all $r,s\in R$ and $p,q\ge 0$. We denote by $(R^e,m,h)$ the universal enveloping algebra of $R$, which has property $\mathbf{P}$. 

First of all, we state the DE-data $\{P,\sigma,\eta,\tau\}$ for the right double Ore extension of $R^e$ in Theorem \ref{Thm:1}.
\begin{itemize}
\item $\sigma: R^e\to M_2(R^e)$ is given by 
\begin{align}\label{e3}
\sigma(m_r)=
\begin{pmatrix}
m_r  & 0\\
m_{\alpha(r)} & m_r
\end{pmatrix},\quad
\sigma(h_r)=
\begin{pmatrix}
h_r+m_{\alpha(r)}  & 0\\
h_{\alpha(r)}+m_{\alpha^2(r)} & h_r+m_{\alpha(r)}
\end{pmatrix},\ \mbox{for all}\ r\in R.
\end{align}
\item The $\sigma$-derivation $\eta: R^e\to (R^e)^{\oplus 2}$ is defined by
\begin{align*}
\eta(m_r)=\begin{pmatrix} 0\\ m_{\delta(r)}\end{pmatrix},\quad \eta(h_r)=\begin{pmatrix} m_{\delta(r)} \\ h_{\delta(r)}+m_{\delta\alpha(r)}\end{pmatrix},\ \mbox{for all}\ r\in R.
\end{align*}
\item  $p_{11}=0, p_{12}=1$ in $P$ and $\tau=0$, i.e., $y_2y_1=y_1y_2$.
\end{itemize}

Secondly, we will show that the DE-data given above is well-defined. For (D1), we define two linear maps $f,g: R\to M_2(R^e)$ by
\begin{align}\label{e2}
f(r)=
\begin{pmatrix}
m_r  & 0\\
m_{\alpha(r)} & m_r
\end{pmatrix},\quad
g(r)=
\begin{pmatrix}
h_r+m_{\alpha(r)}  & 0\\
h_{\alpha(r)}+m_{\alpha^2(r)} & h_r+m_{\alpha(r)}
\end{pmatrix},
\end{align}
for all $r\in R$. According to Theorem \ref{POE}, it is direct to check that the triple $(M_2(R^e),f,g)$ has property $\mathbf{P}$.
Hence the universal property of $R^e$ guarantees the existence of a unique algebra map $\sigma: R^e\to M_2(R^e)$ such that $\sigma m=f$ and $\sigma h=g$, which yields the explicit formula \eqref{e3} of $\sigma$. For (D2), it is easy to check that $\eta$ preserves the relations (R1). Thus $\eta$ can be extended to a unique $\sigma$-derivation on $R^e$. At last direct computation verifies (D3). 

Thirdly, we want to show that the right double Ore extension of $R^e$ associated to the DE-data is indeed a length two iterated Ore extension. 
Recall that, \cite[Definition 1.8]{ZZ:08}, $\sigma$ is said to be invertible if there exits another algebra map $\phi: R^e\to M_2(R^e)$ such that
\begin{align*}
\begin{pmatrix}
\phi_{11}& \phi_{12}\\
\phi_{21}& \phi_{22}
\end{pmatrix}
\begin{pmatrix}
\sigma_{11}& \sigma_{21}\\
\sigma_{12}& \sigma_{22}
\end{pmatrix}
=\begin{pmatrix}
\sigma_{11}& \sigma_{21}\\
\sigma_{12}& \sigma_{22}
\end{pmatrix}
\begin{pmatrix}
\phi_{11}& \phi_{12}\\
\phi_{21}& \phi_{22}
\end{pmatrix}
=
\begin{pmatrix}
Id&   0\\
0&Id
\end{pmatrix}.
\end{align*}

\begin{lemma}\label{lemma:invertible}
The algebra map $\sigma$ is invertible.
\end{lemma}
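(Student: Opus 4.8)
The plan is to exploit the lower-triangular shape of $\sigma$ recorded in \eqref{e3}. Writing $\sigma(a)=\begin{pmatrix}\sigma_{11}(a)&\sigma_{12}(a)\\\sigma_{21}(a)&\sigma_{22}(a)\end{pmatrix}$, the formulas for $\sigma(m_r)$ and $\sigma(h_r)$ give $\sigma_{12}(m_r)=\sigma_{12}(h_r)=0$, so $\sigma_{12}=0$ and $\sigma$ lands in the subalgebra of lower-triangular matrices. Comparing products entrywise in that subalgebra, the diagonal components $\sigma_{11},\sigma_{22}$ are algebra endomorphisms of $R^e$ while the corner obeys $\sigma_{21}(ab)=\sigma_{21}(a)\sigma_{11}(b)+\sigma_{22}(a)\sigma_{21}(b)$. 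Since $\sigma_{11}$ and $\sigma_{22}$ agree on the generators $m_r,h_r$ by \eqref{e3}, they coincide, $\sigma_{11}=\sigma_{22}=:\beta$, and $\sigma_{21}$ is a $(\beta,\beta)$-derivation. My first step is to prove that $\beta$ is an automorphism. I would define $\psi\colon R^e\to R^e$ on generators by $\psi(m_r)=m_r$ and $\psi(h_r)=h_r-m_{\alpha(r)}$, check that $\psi$ preserves the relations (R1), and then confirm $\psi\beta=\beta\psi=\mathrm{id}_{R^e}$ by evaluating on $m_r$ and $h_r$. The only substantial verification is the bracket relation $h_{\{r,s\}}=h_rh_s-h_sh_r$, which, after using $[h_r,m_s]=m_{\{r,s\}}$, reduces precisely to the Poisson-derivation identity $\alpha(\{r,s\})=\{\alpha(r),s\}+\{r,\alpha(s)\}$.

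With $\beta^{-1}=\psi$ available, I would package the transposed matrix appearing in the invertibility definition as $\Sigma:=\begin{pmatrix}\sigma_{11}&\sigma_{21}\\\sigma_{12}&\sigma_{22}\end{pmatrix}=\begin{pmatrix}\beta&\sigma_{21}\\0&\beta\end{pmatrix}$ and define $\phi\colon R^e\to M_2(R^e)$ to be its formal inverse over $\End(R^e)$, namely $\phi:=\begin{pmatrix}\psi&-\psi\sigma_{21}\psi\\0&\psi\end{pmatrix}$. The real content is that this $\phi$ is an algebra map. Because $\phi$ takes values in upper-triangular matrices, it is a unital algebra map exactly when its diagonal entries are unital algebra maps and its $(1,2)$-entry is a $(\psi,\psi)$-derivation. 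The diagonal entries are $\psi$, already an automorphism, so it remains to show that $-\psi\sigma_{21}\psi$ is a $(\psi,\psi)$-derivation. I would do this by a direct computation, using that $\sigma_{21}$ is a $(\beta,\beta)$-derivation, that $\psi$ is an algebra map, and that $\beta\psi=\mathrm{id}_{R^e}$, to obtain $-\psi\sigma_{21}\psi(ab)=\psi(a)\bigl(-\psi\sigma_{21}\psi(b)\bigr)+\bigl(-\psi\sigma_{21}\psi(a)\bigr)\psi(b)$.

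Finally I would confirm the two identities of the definition, $\phi\,\Sigma=\Sigma\,\phi=\mathrm{id}$ in $M_2(\End(R^e))$. Since $\sigma_{12}=0$ and $\phi_{21}=0$, the diagonal entries of both products collapse to $\psi\beta$ or $\beta\psi$, each equal to $\mathrm{id}_{R^e}$, while the off-diagonal entries reduce to $\psi\sigma_{21}-\psi\sigma_{21}(\psi\beta)$ and $\sigma_{21}\psi-(\beta\psi)\sigma_{21}\psi$, both zero again by $\psi\beta=\beta\psi=\mathrm{id}_{R^e}$. This establishes that $\sigma$ is invertible. The main obstacle is the single structural point, the well-definedness of $\psi$ as an algebra endomorphism of $R^e$, since that is the only step where the Poisson-derivation axiom for $\alpha$ is actually used; everything afterward is formal $2\times2$ matrix algebra over $\End(R^e)$ resting on the one fact that $\beta$ is invertible.
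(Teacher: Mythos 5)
Your proof is correct, and it ends up constructing exactly the same inverse $\phi$ as the paper (on generators, $\phi(m_r)=\left(\begin{smallmatrix} m_r & -m_{\alpha(r)}\\ 0& m_r\end{smallmatrix}\right)$ and $\phi(h_r)=\left(\begin{smallmatrix} h_r-m_{\alpha(r)} & -h_{\alpha(r)}+m_{\alpha^2(r)}\\ 0& h_r-m_{\alpha(r)}\end{smallmatrix}\right)$), but the route is organized differently. The paper simply writes down these formulas, obtains $\phi$ as an algebra map ``similarly'' to $\sigma$ --- that is, by checking that the corresponding triple has property $\mathbf{P}$ and invoking the universal property of $R^e$ --- and then verifies by direct computation that $\sigma$ and $\phi$ are mutually inverse. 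You instead factor the problem: the only genuine content is that the common diagonal entry $\beta=\sigma_{11}=\sigma_{22}$ is an automorphism, with inverse $\psi$ determined by $\psi(m_r)=m_r$, $\psi(h_r)=h_r-m_{\alpha(r)}$; once that is in hand, the full matrix inverse $\left(\begin{smallmatrix}\psi & -\psi\sigma_{21}\psi\\ 0&\psi\end{smallmatrix}\right)$ and the identities $\phi\Sigma=\Sigma\phi=\mathrm{Id}$ are formal triangular-matrix algebra over $\End(R^e)$. Your decomposition makes visible exactly where the Poisson-derivation axiom $\alpha(\{r,s\})=\{\alpha(r),s\}+\{r,\alpha(s)\}$ is used (only in the relation $h_{\{r,s\}}=[h_r,h_s]$ for $\psi$), and it dovetails with the way Proposition \ref{Prop:U} later extracts the automorphism $\sigma_{11}=\sigma_{22}$ from invertibility; the paper's version is shorter because it reuses the universal-property mechanism already set up for $\sigma$. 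One small remark: your well-definedness check for $\psi$ could itself be delegated to the universal property by observing that the triple $(R^e,\, m,\, h-m\alpha)$ has property $\mathbf{P}$, which is precisely the verification the paper's ``similarly'' is gesturing at; either way, make sure the check covers the entire list of defining relations following (R1), not just the first one.
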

\begin{proof}
Similarly, we can define another algebra map $\phi: R^e\to M_2(R^e)$ such that
\begin{align*}
\phi(m_r)=
\begin{pmatrix}
m_r  & -m_{\alpha(r)} \\
0& m_r
\end{pmatrix},\quad
\phi(h_r)=
\begin{pmatrix}
h_r-m_{\alpha(r)}  & -h_{\alpha(r)}+m_{\alpha^2(r)}\\
0 & h_r-m_{\alpha(r)}
\end{pmatrix},
\end{align*}
for all $r\in R$. Direct computation shows that $\sigma$ and $\phi$ are inverse to each other.
\end{proof}
\begin{proposition}\label{Prop:U}
The right double Ore extension of $R^e$ is a length two iterated Ore extension such that 
\begin{align*}
R^e_P[y_1,y_2;\sigma,\eta]=R^e[y_1;\sigma_1',\eta_1'][y_2;\sigma_2',\eta_2'],
\end{align*}
where $\sigma_1'(a)=\sigma_{11}(a), \sigma_2'(a)=\sigma_{22}(a),\sigma_2'(y_1)=y_1$ and $\eta_1'(a)=\eta_1(a),\eta_2'(a)=\sigma_{21}(a)y_1+\eta_2(a),\eta_2'(y_1)=0$ for all $a\in R^e$.
\end{proposition}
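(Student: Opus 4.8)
The plan is to deduce the iterated structure from \cite[Theorem 2.4]{CLM:11}, whose hypotheses we have essentially already arranged, and then to read off and confirm the explicit data. That theorem asserts that a right double Ore extension is a length two iterated Ore extension once $p_{11}=0$ and $\sigma$ is invertible. The first condition is built into our DE-data, where in fact $P=\{0,1\}$ and $\tau=0$, so that $y_2y_1=y_1y_2$; the second is exactly Lemma \ref{lemma:invertible}. The structural observation that makes the formulas transparent is that $\sigma$ takes values in lower triangular matrices: both $\sigma(m_r)$ and $\sigma(h_r)$ in \eqref{e3} are lower triangular and they generate $R^e$, so $\sigma_{12}=0$ identically. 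Consequently $\sigma_{11}$ and $\sigma_{22}$ are algebra endomorphisms of $R^e$, being the diagonal corners of an algebra map into lower triangular matrices, and a glance at \eqref{e3} shows in fact $\sigma_{11}=\sigma_{22}$.

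First I would record the first Ore extension. With $\sigma_1'=\sigma_{11}$ and $\eta_1'=\eta_1$, the pair $(\sigma_1',\eta_1')$ makes $R^e[y_1;\sigma_1',\eta_1']$ a genuine Ore extension: $\sigma_{11}$ is an algebra endomorphism by the previous paragraph, and reading off the first component of the $\sigma$-derivation identity $\eta(ab)=\sigma(a)\eta(b)+\eta(a)b$ gives $\eta_1(ab)=\sigma_{11}(a)\eta_1(b)+\eta_1(a)b$, i.e. $\eta_1$ is a $\sigma_{11}$-derivation.

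Next I would set up the second extension by defining $\sigma_2'$ and $\eta_2'$ on $R^e[y_1]$ through the stated formulas, namely $\sigma_2'(a)=\sigma_{22}(a)$ and $\eta_2'(a)=\sigma_{21}(a)y_1+\eta_2(a)$ for $a\in R^e$, together with $\sigma_2'(y_1)=y_1$ and $\eta_2'(y_1)=0$, and then verify that they descend to $R^e[y_1]$. The content lies in compatibility with the defining relation $y_1a=\sigma_{11}(a)y_1+\eta_1(a)$ of the first extension. For $\sigma_2'$ this reduces, after commuting $y_1$ past $\sigma_{22}(a)$, to the identities $\sigma_{11}\sigma_{22}=\sigma_{22}\sigma_{11}$ and $\eta_1\sigma_{22}=\sigma_{22}\eta_1$, which are precisely the second and fifth compatibility conditions of (D3) specialized to $\sigma_{12}=0$, $p_{11}=0$, $p_{12}=1$, $\tau=0$. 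For $\eta_2'$ the derivation rule over $R^e$ repackages exactly the $(2,1)$- and $(2,2)$-components of the algebra-map and $\sigma$-derivation identities, while compatibility with the $y_1$-relation, after expanding $\eta_2'(y_1a)$ and $\eta_2'(\sigma_{11}(a)y_1+\eta_1(a))$ and matching the coefficients of $y_1^2$, $y_1$ and $1$, reduces respectively to $\sigma_{11}\sigma_{21}=\sigma_{21}\sigma_{11}$, to $\eta_1\sigma_{21}+\sigma_{11}\eta_2=\eta_2\sigma_{11}+\sigma_{21}\eta_1$, and to $\eta_1\eta_2=\eta_2\eta_1$, which are the first, fourth and sixth conditions of (D3).

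The main obstacle is precisely this last verification, the compatibility of $\sigma_2'$ and $\eta_2'$ with the Ore relation of the first extension, since every other check reduces to routine lower-triangular bookkeeping. The point to stress is that these compatibilities are not new work but are literally the six conditions (D3), already established when the DE-data was shown to be well-defined, so nothing remains beyond reindexing. Finally, invertibility of $\sigma$, hence of $\sigma_{11}=\sigma_{22}$, guarantees that both objects are free over $R^e$ on the basis $\{y_1^iy_2^j\}$, so to conclude equality of algebras I would only confront generators and relations: the iterated relation $y_2a=\sigma_{22}(a)y_2+\sigma_{21}(a)y_1+\eta_2(a)$ reproduces the second row of (D1), the relation $y_1a=\sigma_{11}(a)y_1+\eta_1(a)$ reproduces the first, and $y_2y_1=\sigma_2'(y_1)y_2+\eta_2'(y_1)=y_1y_2$ reproduces (D2). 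This matching of generators, relations and $\mathrm{PBW}$ basis identifies $R^e_P[y_1,y_2;\sigma,\eta]$ with $R^e[y_1;\sigma_1',\eta_1'][y_2;\sigma_2',\eta_2']$.
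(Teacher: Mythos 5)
Your proposal is correct and follows the same route as the paper: observe from \eqref{e3} that $\sigma$ is lower triangular with $\sigma_{12}=0$ and $\sigma_{11}=\sigma_{22}$, use Lemma \ref{lemma:invertible} to conclude $\sigma_{11}$ is an automorphism, and invoke \cite[Theorem 2.4]{CLM:11}. The paper stops there, asserting only that the formulas for $\sigma_i'$ and $\eta_i'$ are ``easy to deduce from the DE-data''; your additional direct check that $\sigma_2'$ and $\eta_2'$ descend to $R^e[y_1;\sigma_1',\eta_1']$ precisely via the six conditions (D3) is exactly the content that the citation encapsulates, so nothing is missing.
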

\begin{proof}
By \eqref{e3}, we know $\sigma_{12}=0$ and $\sigma_{11}=\sigma_{22}$. Hence $\phi_{11}\sigma_{11}=\sigma_{11}\phi_{11}=Id$ by Lemma \ref{lemma:invertible}, which implies that $\sigma_{11}=\sigma_{22}$ is an automorphism of $R^e$. Then the right double Ore extension is an iterated Ore extension by \cite[Theorem 2.4]{CLM:11} (interchanging $y_1$ and $y_2$). The formulas of $\sigma_i'$ and $\eta_i'$ for $i=1,2$ are easy to deduce from the DE-data.
\end{proof}
Finally, we use the universal property to prove that the right double Ore extension $R^e_P[y_1,y_2;\sigma,\eta]$ is the universal enveloping algebra of $R[x;\alpha,\delta]_P$. We extend the two linear maps $m,h: R\to R^e$ to $R[x;\alpha,\delta]_P$ by
\begin{align}\label{e4}
m\left(\sum_{i=0}^{n} r_ix^i\right)&=\sum_{i=0}^{n}  m_{r_i}y_1^i,\\
h\left(\sum_{i=0}^{n}  r_ix^i\right)&=h_{r_0}+\sum_{i=1}^{n}  \left(im_{r_i}y_1^{i-1}y_2+(h_{r_i}+im_{\alpha(r_i)})y_1^{i}+im_{\delta(r_i)}y_1^{i-1}\right),\ \mbox{for all}\ r_i\in R,\notag
\end{align}
where we keep the same notations.

\begin{lemma}\label{PropertyP}
The triple $\left(R^e_P[y_1,y_2;\sigma,\eta],m,h\right)$ has property $\mathbf{P}$.
\end{lemma}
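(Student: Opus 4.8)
The plan is to verify directly that the triple $\left(R^e_P[y_1,y_2;\sigma,\eta],m,h\right)$, with $m,h$ as extended in \eqref{e4}, satisfies each of the four defining axioms (P1)--(P4) of property $\mathbf{P}$. Since the target algebra $A=R[x;\alpha,\delta]_P$ has underlying vector space $\bigoplus_i Rx^i$, and both $m$ and $h$ are defined additively on the powers of $x$, it suffices by linearity and the multiplicativity/bilinearity of the relevant structure maps to check the axioms on the generating elements $rx^p$ (or on pairs $rx^p, sx^q$). The key input on the $A$-side is the explicit Poisson bracket formula \eqref{e1}, and the key input on the $R^e_P[y_1,y_2;\sigma,\eta]$-side is the set of commutation relations encoded by the DE-data, namely that conjugating $y_1$ and $y_2$ past $m_r$ and $h_r$ is governed by $\sigma$ and $\eta$ from \eqref{e3}, together with $y_2y_1=y_1y_2$.

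First I would dispatch (P1): that $m$ is an algebra map amounts to checking $m(rx^p\cdot sx^q)=m(rx^p)m(sx^q)$, i.e. $m_{rs}y_1^{p+q}=m_{r}y_1^p m_s y_1^q$; this reduces to the relation $m_{rs}=m_rm_s$ in $R^e$ together with the commutation rule for moving $y_1^p$ past $m_s$, which by $\sigma_{11}(m_s)=m_s$ and $\eta_1(m_s)=0$ is simply $y_1 m_s = m_s y_1$ modulo lower-order $\delta$-terms — so one must track how $\sigma_1'=\sigma_{11}$ and $\eta_1'=\eta_1$ interact with powers of $x$. The cleanest route is to first establish the single-variable commutation identity $y_1 m_r = m_r y_1 + m_{\delta(r)}$ and its iterate $y_1^p m_r$, and similarly $y_1 h_r$, from the DE-data, and then assemble the general case. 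Next (P2) requires $h$ to be a Lie algebra map, i.e. $[h(rx^p),h(sx^q)]=h(\{rx^p,sx^q\})$; here I would substitute the three-term expression for $h$ from \eqref{e4}, expand the commutator using the $y_1,y_2$ relations, and match against $h$ applied to the right-hand side of \eqref{e1}. This is where the definitions of $\sigma_2'$, $\eta_2'$ and the choice $p_{12}=1,p_{11}=0,\tau=0$ (so $y_2y_1=y_1y_2$) do the real work.

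The axioms (P3) and (P4) are the bilinear compatibility conditions $m_{\{r,s\}}=[h_r,m_s]$ and $h_{rs}=m_rh_s+m_sh_r$ lifted to the extension; I would verify these by the same substitute-and-expand method, again reducing to the generator-level commutation relations furnished by $\sigma$ and $\eta$ and to the already-known relations (R1) in $R^e$. The bookkeeping is systematized by noting that $\sigma(h_r)$ carries the correction term $m_{\alpha(r)}$ on its diagonal and $h_{\alpha(r)}+m_{\alpha^2(r)}$ off-diagonal, while $\eta(h_r)$ contributes the $\delta$-corrections; these are precisely engineered so that the $x^{p+q}$ and $x^{p+q-1}$ components of \eqref{e1} come out correctly after expansion.

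The main obstacle will be (P2), the verification that $h$ respects the Lie bracket, because $h(rx^p)$ is a genuinely three-term expression (an $m_{r}y_1^{p-1}y_2$ term, an $(h_r+pm_{\alpha(r)})y_1^p$ term, and an $m_{\delta(r)}y_1^{p-1}$ term), so the commutator $[h(rx^p),h(sx^q)]$ expands into a large number of cross terms. The difficulty is not conceptual but combinatorial: one must show that after collecting terms by total $y_1$-degree and by whether a $y_2$ survives, everything cancels down to exactly the two groups matching the $x^{p+q}$ and $x^{p+q-1}$ parts of $h(\{rx^p,sx^q\})$. I expect the cleanest way to control this is to prove the auxiliary commutation formulas for $y_1^k y_2$ past $m_r$ and $h_r$ once and for all, then feed them mechanically into the expansion, so that the bracket identity follows from \eqref{e1} and the Jacobi/Leibniz structure inherited from $R^e$ rather than from an unstructured term-by-term grind.
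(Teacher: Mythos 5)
Your overall strategy coincides with the paper's: verify (P1)--(P4) on the generators $rx^p$, $sx^q$ using the commutation relations that the DE-data imposes on $y_1,y_2$ against $m_R$ and $h_R$. But two points need attention. First, the auxiliary identity you propose to ``establish once and for all,'' namely $y_1m_r=m_ry_1+m_{\delta(r)}$, is false. From (D1) and \eqref{e3} one reads off $y_1m_r=m_ry_1$ \emph{exactly} (since $\sigma_{12}(m_r)=0$ and $\eta_1(m_r)=0$), while the $\delta$- and $\alpha$-corrections live in the \emph{other} row: $y_2m_r=m_ry_2+m_{\alpha(r)}y_1+m_{\delta(r)}$ and $y_1h_r=(h_r+m_{\alpha(r)})y_1+m_{\delta(r)}$. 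This is not cosmetic: (P1) requires $m_ry_1^p\,m_sy_1^q=m_{rs}y_1^{p+q}$, which holds only because $y_1$ commutes with $m_R$ on the nose; with your stated relation $m$ would fail to be multiplicative. You in fact note $\eta_1(m_s)=0$ earlier in the same sentence, so the later formula contradicts your own setup --- fix the bookkeeping before feeding these relations into the expansions.

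Second, for (P2) you plan a direct expansion of $[h(rx^p),h(sx^q)]$ and correctly identify this as the combinatorial bottleneck, but you do not supply the device that actually tames it. The paper's proof never expands that commutator in full generality: it checks \eqref{e7} only for $a,b\in R$ and for $a=x$, $b=r\in R$, and then extends to arbitrary monomials by induction on degree, using the already-verified (P3) and (P4) to show that if $h(\{c,b\})=[h(c),h(b)]$ and $h(\{d,b\})=[h(d),h(b)]$ then the same holds for $cd$ (via $h(cd)=m(c)h(d)+m(d)h(c)$ and the Leibniz rule for $\{-,b\}$). This is the missing idea in your outline: it requires proving (P3) and (P4) \emph{before} (P2), whereas you order them the other way. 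Your brute-force route is not wrong in principle --- the identity is true and a careful expansion using the corrected commutation formulas would close it --- but as written the hardest step of the lemma is left as an unexecuted ``large number of cross terms,'' and the structural shortcut you gesture at (``the Jacobi/Leibniz structure inherited from $R^e$'') is precisely the induction you have not formulated.
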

\begin{proof}
By (D1) and \eqref{e3}, we have the following identity in $R^e_P[y_1,y_2;\sigma,\eta]$: 
\begin{align}\label{e5}
y_2m_r&=m_ry_2+m_{\alpha(r)}y_1+m_{\delta(r)},\ \mbox{for all}\ r\in R.
\end{align}
Moreover, from $y_1h_r=h_ry_1+m_{\alpha(r)}y_1+m_{\delta(r)}$, inductively we have
\begin{align}\label{e6}
y_1^qh_r=(h_r+qm_{\alpha(r)})y_1^q+qm_{\delta(r)}y_1^{q-1},\ \mbox{for all}\ r\in R,q\ge1.
\end{align}

For condition (P1), by definition, we know that $\sigma=Id,\eta=0$ on $m_R$. So $y_1$ commutes with $m_R$. Then $m$ is a well-defined algebra map.

For condition (P3), it suffices to take $rx^p$ and $sx^q$ for any $r,s\in R$ and $p,q\ge 0$. We have
\begin{align*}
RHS\ of\ (P3)&=\left[pm_ry_1^{p-1}y_2+(h_r+pm_{\alpha(r)})y_1^p+pm_{\delta(r)}y_1^{p-1},\; m_sy_1^q\right] &\mbox{by \eqref{e4}} \\
&=(pm_ry_1^{p-1}y_2)(m_sy_1^q)-(m_sy_1^q)(pm_ry_1^{p-1}y_2)+(h_ry_1^p)(m_sy_1^q)-(m_sy_1^q)(h_ry_1^p) \\
&=pm_ry_1^{p-1}(m_sy_2+m_{\alpha(s)}y_1+m_{\delta(s)})y_1^q-pm_{rs}y_1^{p+q-1}y_2+h_rm_sy_1^{p+q}& \mbox{by \eqref{e5}}\\ &\quad-m_s\left((h_r+qm_{\alpha(r)})y_1^q+qm_{\delta(r)}{y_1}^{q-1}\right)y_1^{p}& \mbox{by \eqref{e6}}\\
&=\left(h_rm_s-m_sh_r+pm_{r\alpha(s)}-qm_{\alpha(r)s}\right)y_1^{p+q}+\left(pm_{r\delta(s)}-qm_{\delta(r)s}\right)y_1^{p+q-1}\\
&=m\left((\{r,s\}+pr\alpha(s)-q\alpha(r)s)x^{p+q}+(pm_{r\delta(s)}-qm_{\delta(r)s})x^{p+q-1}\right) &\mbox{by \eqref{e1}}\\
&=LHS\ of\ (P3).
\end{align*}

For condition (P4), we have
\begin{align*}
RHS\ of\ (P4)&=(m_ry_1^p)(qm_sy_1^{q-1}y_2+(h_s+qm_{\alpha(s)})y_1^q+qm_{\delta(s)}y_1^{q-1})+(m_sy_1^q)(pm_ry_1^{p-1}y_2\\&\quad+(h_r+pm_{\alpha(r)})y_1^p+pm_{\delta(r)}y_1^{p-1}) & \mbox{by \eqref{e4}}\\
&=(p+q)m_{rs}y_1^{p+q-1}y_2+(pm_{\alpha(r)s}+qm_{r\alpha(s)})y_1^{p+q}+(pm_{\delta(r)s}+qm_{r\delta(s)})y_1^{p+q-1}\\&\quad+m_ry_1^ph_sy_1^q+m_sy_1^qh_ry_1^p.
\end{align*}
Apply \eqref{e6} to the last two terms $m_r(y_1^ph_s)y_1^q$ and $m_s(y_1^qh_r)y_1^p$ of the above equality. Then
\begin{align*}
RHS\ of\ (P4)&=(p+q)m_{rs}y_1^{p+q-1}y_2+\left(m_rh_s+m_sh_r+(p+q)(m_{\alpha(r)s}+m_{r\alpha(s)})\right)y_1^{p+q}\\
&\quad+(p+q)(m_{\delta(r)s}+m_{r\delta(s)})y_1^{p+q-1}\\
&=(p+q)m_{rs}y_1^{p+q-1}y_2+\left(h_{rs}+(p+q)m_{\alpha(rs)}\right)y_1^{p+q}+(p+q)m_{\delta(rs)}y_1^{p+q-1}\\
&=h(rsx^{p+q})&\mbox{by \eqref{e4}}\\
&=LHS\ of\ (P4).
\end{align*}

For condition (P2), we need to show that $h$ is a Lie algebra map, i.e.,
\begin{align}\label{e7}
h(\{a,b\})=[h(a),h(b)]=h(a)h(b)-h(b)h(a),\ \mbox{for all}\ a,b\in R[x;\alpha,\delta]_p.
\end{align}
By definition, \eqref{e7} holds for $a,b\in R$. If $a=x$ and $b=r\in R$, then
\begin{align*}
LHS\ of\ \eqref{e7}=h(\alpha(r)x+\delta(r))=m_{\alpha(r)}y_2+(h_{\alpha(r)}+m_{\alpha^2(r)})y_1+m_{\delta\alpha(r)}+h_{\delta(r)}.
\end{align*}
On the other side, we have
\begin{align*}
RHS\ of\ \eqref{e7}=y_2h_r-h_ry_2=(h_{\alpha(r)}+m_{\alpha^2(r)})y_1+(h_r+m_{\alpha(r)})y_2+h_{\delta(r)}+m_{\delta\alpha(r)}-h_ry_2.
\end{align*}
Hence \eqref{e7} also holds in this case. It suffices to consider $a,b$ to be both monomials. For arbitrary $b$, we do inductions on the degree of $a$. It suffices to show that if \eqref{e7} holds for $h(\{c,b\}),h(\{d,b\})$, then it is true for $h(\{cd,b\})$. We get
\begin{align*}
&h(\{cd,b\})\\
=&h(\{c,b\}d+\{d,b\}c)\\
=&m(\{c,b\})h(d)+m(d)h(\{c,b\})+m(\{d,b\})h(c)+m(c)h(\{d,b\}) & \mbox{by (P4)}\\
=&m(\{c,b\})h(d)+m(d)\left(h(c)h(b)-h(b)h(c)\right)+m(\{d,b\})h(c)+m(c)\left(h(d)h(b)-h(b)h(d)\right)\\
=&(m(c)h(d)+m(d)h(c))h(b)-h(b)(m(c)h(d)+m(d)h(c))+m(\{b,c\})h(c)\\&+h(\{b,d\})h(d)+m(\{c,b\})h(d)+m(\{d,b\})h(c) &\mbox{by (P3)}\\
=&h(cd)h(b)-h(b)h(cd).
\end{align*}
This completes the proof.
\end{proof}
\begin{proof}[Proof of Theorem \ref{Thm:1}]
By Proposition \ref{Prop:U} and Lemma \ref{PropertyP}, it suffices to prove the universal property of  $R^e_P[y_1,y_2;\sigma,\eta]$ with two linear maps $m,h$ defined in \eqref{e4}. We work over the following commutative diagram with respect to property $\mathbf{P}$:
\[
\xymatrix{
R[x,\sigma,\delta]_P \ar[rr]^-{m}\ar[rr]_-{h}\ar[rd]^-{f}_-{g} &   &  R_P^e[y_1,y_2;\sigma,\eta]\ar@{-->}[dl]^-{\phi}\\
&    B  &\\
R\ar[rr]^-{m}\ar[rr]_-{h}\ar[uu]^-{i_R}\ar[ur]^-{f_R}_-{g_R}&     &R^e\ar[uu]_-{i_{R^e}}\ar@{-->}[ul]_-{\phi_R}
}.
\]
Let $(B,f,g)$ be another triple satisfying property $\mathbf{P}$. By precomposing $f$ and $g$ with the natural inclusion $i_R: R\to R[x;\alpha,\delta]_P$, we get two linear maps $f_R,g_R: R\to B$. It is clear that $(B,f_R,g_R)$ has property $\mathbf{P}$ induced from $(B,f,g)$. Hence, by the universal property of $R^e$, we obtain a unique algebra map $\phi_R$ from $R^e$ to $B$ such that $\phi_Rm=f_R$ and $\phi_Rh=g_R$.

Next we define an algebra map $\phi: R^e_P[y_1,y_2;\sigma,\eta]\to B$ by $\phi|_{R^e}=\phi_R$ and $\phi(y_1)=f(x)$ and $\phi(y_2)=g(x)$. In order to show that $\phi$ is well-defined, it suffices to check that $\phi$ preserves the relations in the DE-data. For (D2), we have
\begin{align*}
\phi(y_2y_1-y_1y_2)=g(x)f(x)-f(x)g(x)=f(\{x,x\})=0.
\end{align*}
For (D1), it is enough to take $a=m_r$ or $a=h_r$ for all $r\in R$. When $a=m_r$, we have
\begin{align*}
\phi\left[\begin{pmatrix}
y_1\\
y_2
\end{pmatrix}
m_r-\sigma(m_r)\begin{pmatrix}
y_1\\
y_2
\end{pmatrix}
-\eta(m_r)
\right]
=&\left[\begin{pmatrix}
\phi(y_1)\\
\phi(y_2)
\end{pmatrix}
\phi(m_r)-\phi(\sigma(m_r))\begin{pmatrix}
\phi(y_1)\\
\phi(y_2)
\end{pmatrix}
-\phi(\eta(m_r))
\right]\\
=&\left[\begin{pmatrix}
f(x)\\
g(x)
\end{pmatrix}f(r)
-\begin{pmatrix}
f(r)&0\\
f(\alpha(r))& f(r)
\end{pmatrix}
\begin{pmatrix}
f(x)\\
g(x)
\end{pmatrix}
-\begin{pmatrix}
0\\
f(\delta(r))
\end{pmatrix}
\right]\\
=&
\begin{pmatrix}
f(x)f(r)-f(r)f(x)\\
g(x)f(r)-f(\alpha(r))f(x)-f(r)g(x)-f(\delta(r))
\end{pmatrix}\\
=&\begin{pmatrix}
f(xr-rx)\\
f(\{x,r\}-\alpha(r)x-\delta(r))
\end{pmatrix}\\
=&0.
\end{align*}
When $a=h_r$, it is similar. Finally, the algebra map $\phi$ makes the above diagram commutate and the uniqueness of $\phi$ comes from the universal property of $R^e$.
\end{proof}

\begin{proof}[Proof of Corollary \ref{Cor:1}]
(1)-(4) is well-known for the properties of Ore extensions. Note that a connected graded algebra is twisted Calabi-Yau if and only if it is Artin-Schelter regular. So (5) and (6) comes from \cite{LWW:13}. And (7) follows from \cite[Corollary 1.3]{Phan}.
\end{proof}

\section{Examples}
As applications, we will consider examples of iterated quadratic Poisson-Ore extensions starting from a free quadratic Poisson algebra $\K[x_1,x_2,\cdots,x_n]$ with trivial Poisson bracket. Since the universal enveloping algebra of the initial Poisson algebra is a quadratic polynomial algebra with double-sized variables, all the properties in Corollary \ref{Cor:1} apply in all these cases.

\subsection{Semiclassical limits of quantized coordinate rings}
We explicitly treat one example: the coordinate rings of quantum matrices and their semiclassical limits. Let $\Char\K=0$, and $\K[[\hbar]]$ be the formal power series, where we define
\[
e(\alpha):=\sum_{i=0}^{\infty} \frac{\alpha^i}{i!}\hbar^i,
\]
for any $\alpha\in\K$. Given a nonzero scalar $\lambda\in \K^\times$ and a multiplicatively skew-symmetric matrix $p=(p_{ij})\in M_n(\K^\times)$. The multiparameter quantum $n\times n$ matrix algebra $B:=\mathcal O_{e(\lambda),e(p)}(M_n(\K[[\hbar]]))$ is the $\K[[\hbar]]$-algebra with generators $X_{ij}$ for $i, j=1,\cdots,n$, subject to 
\[
X_{lm}X_{ij}=
\begin{cases}
e(p_{li})e(p_{jm})X_{ij}X_{lm}+(e(\lambda)-1)e(p_{li})X_{im}X_{lj} & (l>i,m>j)\\
e(\lambda)e(p_{li})e(p_{jm})X_{ij}X_{lm} &(l>i,m\le j)\\
e(p_{jm})X_{ij}X_{lm}& (l=i,m>j).
\end{cases}
\]
Note that $B/\hbar B=\mathcal O(M_n(\K))$, the ordinary coordinate rings of matrix algebras. Denote the generators $x_{ij}:=X_{ij}+\hbar B$ in $\mathcal O(M_n(\K))$. The semiclassical limit process equips $\mathcal O(M_n(\K))$ with a Poisson bracket such that $\{x_{lm},x_{ij}\}=\overline{[X_{lm},X_{ij}]/\hbar}$. Explicitly,
\[
\left\{x_{lm},x_{ij}\right\}=
\begin{cases}
(p_{li}+p_{jm})x_{ij}x_{lm}+(\lambda-1)x_{im}x_{lj} & (l>i,m>j)\\
(\lambda+p_{li}+p_{jm})x_{ij}x_{lm} &(l>i,m\le j)\\
p_{jm}x_{ij}x_{lm}& (l=i,m>j).
\end{cases}
\]
After assigning a lexicographic order $x_{11}<x_{12}<\cdots<x_{nn}$ on the generators, we see that $\mathcal O(M_n(\K))$ is an iterated quadratic Poisson algebra of the form
\[
\mathcal O(M_n(\K))=\K[x_{11}][x_{12};\alpha_{12},\delta_{12}]_p\cdots[x_{nn};\alpha_{nn},\delta_{nn}]_p,
\]
where the derivations are given by
\[
\alpha_{lm}(x_{ij})
\begin{cases}
(p_{li}+p_{jm})x_{ij}& (l>i,m>j)\\
(\lambda+p_{li}+p_{jm})x_{ij} &(l>i,m\le j)\\
p_{jm}x_{ij} & (l=i,m>j).
\end{cases}
\]
And $\delta_{lm}(x_{ij})=(\lambda-1)x_{im}x_{lj}$ when $l>i,m>j$ and $\delta_{lm}(x_{ij})=0$ otherwise. More iterated quadratic Poisson algebras are provided in \cite[\S 2]{GL:11} through the semiclassical limit process. In a conclusion, we have the following:
\begin{proposition}
The Poisson universal enveloping algebras of the semiclassical limits of 
\begin{enumerate}
\item quantum affine spaces;
\item quantum matrices;
\item quantum symplectic and even-dimensional euclidean spaces;
\item quantum odd-dimensional euclidean spaces;
\item quantum symmetric and antisymmetric matrices
\end{enumerate}
are all Noetherian, Artin-Schelter regular and Koszul domains.
\end{proposition}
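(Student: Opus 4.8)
The plan is to deduce the Proposition entirely from Corollary \ref{Cor:1}, by exhibiting each of the five families as a graded iterated quadratic Poisson-Ore extension of a polynomial algebra equipped with the trivial Poisson bracket, and then checking that the universal enveloping algebra of this base already enjoys all the required properties. Theorem \ref{Thm:1} underlies the corollary, but for the applications it is the corollary that is invoked directly.

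First I would fix the base Poisson algebra $R=\K[x_1]$ (a single variable suffices, since each family is built up by iterated extensions from one generator) with trivial Poisson bracket, regarded as a connected graded Poisson algebra with $\deg x_1=1$. Because the bracket vanishes, the defining relations (R1) of $R^e$ force $m_R$ and $h_R$ to commute with one another, so $R^e=\K[m_{x_1},h_{x_1}]$ is a commutative polynomial algebra in two variables. As a commutative polynomial ring it is a connected graded Noetherian domain of finite global dimension that is Artin-Schelter regular and Koszul; these are exactly properties (1), (2), (6), (7) of Corollary \ref{Cor:1} for the seed algebra.

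Next I would verify that each semiclassical limit in the list is a graded iterated quadratic Poisson-Ore extension of $R$. For quantum matrices this is already carried out in the preceding computation, where the Poisson bracket is displayed together with the explicit derivations $\alpha_{lm}$ and $\delta_{lm}$; one checks that $\alpha_{lm}$ sends $x_{ij}$ to a scalar multiple of $x_{ij}$ (hence is a degree-$0$ Poisson derivation) while $\delta_{lm}(x_{ij})=(\lambda-1)x_{im}x_{lj}$ is homogeneous of degree $2=\deg x_{ij}+1$ (hence a quadratic Poisson $\alpha_{lm}$-derivation), so each extension step is both graded and quadratic in the sense of Subsection \ref{subsection:POE}. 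For the remaining families, namely quantum affine spaces, quantum symplectic and even-dimensional euclidean spaces, quantum odd-dimensional euclidean spaces, and quantum symmetric and antisymmetric matrices, I would invoke the iterated Poisson presentations obtained through the semiclassical limit process in \cite[\S 2]{GL:11} and perform the same degree bookkeeping on the corresponding $\alpha$'s and $\delta$'s to confirm that each is a graded quadratic Poisson-Ore extension over the trivially-bracketed polynomial base.

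Finally, with the base $R^e$ having the desired properties and each family realized as a graded iterated quadratic Poisson-Ore extension $A$ of $R$, I would apply Corollary \ref{Cor:1}: parts (1) and (2) give that $A^e$ is a Noetherian domain, part (6) gives Artin-Schelter regularity, and part (7), applicable since the extensions are quadratic, gives Koszulity. The main obstacle is the middle step: checking, family by family, that the presentations of \cite[\S 2]{GL:11} really are iterated Poisson-Ore extensions in the sense of Subsection \ref{subsection:POE}, that is, that each $\alpha$ is a genuine Poisson derivation and each $\delta$ a genuine Poisson $\alpha$-derivation, and that every extension step is homogeneous with $\delta$ raising degree by exactly one, so that both the graded and the quadratic hypotheses of Corollary \ref{Cor:1} are met. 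Once this degree-and-derivation bookkeeping is in place, the conclusion is immediate.
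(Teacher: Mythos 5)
Your proposal is correct and follows essentially the same route as the paper: realize each family, via the presentations in \cite[\S 2]{GL:11}, as a graded iterated quadratic Poisson-Ore extension of $\K[x]$ with trivial bracket, note that the base enveloping algebra is the polynomial ring $\K[x,y]$, and conclude by Theorem \ref{Thm:1} and Corollary \ref{Cor:1}. Your additional degree-and-derivation bookkeeping only makes explicit what the paper leaves implicit.
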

\begin{proof}
From \cite[\S 2]{GL:11}, we see that these quadratic Poisson algebras are all iterated Poisson-Ore extensions of the polynomial algebra $\K[x]$ with trivial Poisson bracket. Hence their universal enveloping algebras are iterated Ore-extensions of $\K[x,y]$ by Theorem \ref{Thm:1}. Then the result follows from Corollary \ref{Cor:1}.
\end{proof}

\subsection{Graded Poisson algebras with low ranked Poisson structures}
Throughout, let $A=\K[x_1,x_2,\cdots,x_n]$ be a graded Poisson algebra with $\deg x_i=1$ for all $1\le i\le n$. Assume the Poisson bracket of $A$ is given by
\begin{align*}
\{x_i,x_j\}=c_{ij}\left(x_1^2+x_2^2+\cdots+x_n^2\right),
\end{align*}
for all $1\le i,j\le n$ and $c_{ij}\in \K$. Note that the matrix of coefficients $C:=(c_{ij})$ is skew-symmetric, and the Poisson algebra will be denoted by $A(C)$. For the sake of simplicity, the base field $\K$ is algebraically closed and $\Char\K\neq 2$.
\begin{lemma}\label{lem3-1}
The following are equivalent:
\begin{enumerate}
\item $A(C)$ is a graded Poisson algebra.
\item The Jacobian identity holds for all elements in $A_1$.
\item $c_{ij}c_{ks}+c_{jk}c_{is}+c_{ki}c_{js}=0$ for all $1\le i<j<k<s\le n$.
\item $\rank (C)\le 2$.
\end{enumerate}
\end{lemma}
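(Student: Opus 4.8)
The plan is to establish the three equivalences $(1)\Leftrightarrow(2)$, $(2)\Leftrightarrow(3)$, and $(3)\Leftrightarrow(4)$, which together give the cyclic chain. For $(1)\Leftrightarrow(2)$, I would first note that the prescription $\{x_i,x_j\}=c_{ij}(x_1^2+\cdots+x_n^2)$ extends uniquely to an antisymmetric biderivation of the polynomial algebra $A$ which is homogeneous of degree $0$, since a biderivation of a free commutative algebra is freely determined by its values on the generators. Consequently the Leibniz rule, antisymmetry, and the degree condition hold by construction, so $A(C)$ is a graded Poisson algebra if and only if the Jacobi identity holds. The key structural fact is that the Jacobiator $J(a,b,c):=\{\{a,b\},c\}+\{\{b,c\},a\}+\{\{c,a\},b\}$ is a derivation in each of its three arguments, a direct consequence of the Leibniz rule. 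Hence $J\equiv 0$ on all of $A$ precisely when $J$ vanishes on a generating set, namely on $A_1=\K x_1\oplus\cdots\oplus\K x_n$, which is exactly the content of $(1)\Leftrightarrow(2)$.

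For $(2)\Leftrightarrow(3)$ I would compute the Jacobiator on generators explicitly. Writing $q:=x_1^2+\cdots+x_n^2$, the Leibniz rule gives $\{q,x_k\}=2q\sum_{s}c_{sk}x_s$, and therefore $\{\{x_i,x_j\},x_k\}=2c_{ij}\,q\sum_{s}c_{sk}x_s$. Summing cyclically over $(i,j,k)$ produces
\[
J(x_i,x_j,x_k)=2q\sum_{s}\bigl(c_{ij}c_{sk}+c_{jk}c_{si}+c_{ki}c_{sj}\bigr)x_s.
\]
Since $A$ is a domain with $q\neq0$ and the $x_s$ are linearly independent over $\K$, this vanishes if and only if $c_{ij}c_{sk}+c_{jk}c_{si}+c_{ki}c_{sj}=0$ for all indices. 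Applying antisymmetry of $C$ rewrites the bracketed term as $c_{ij}c_{ks}+c_{jk}c_{is}+c_{ki}c_{js}$, which is trivially zero when $s\in\{i,j,k\}$ and is invariant up to sign under permuting the four distinct indices, so the whole family of conditions collapses to the range $i<j<k<s$ appearing in $(3)$.

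The conceptual core is $(3)\Leftrightarrow(4)$. Using $c_{ki}=-c_{ik}$, the expression in $(3)$ becomes $c_{ij}c_{ks}-c_{ik}c_{js}+c_{is}c_{jk}$, which is exactly the Pfaffian $\mathrm{Pf}$ of the $4\times4$ principal submatrix of $C$ on rows and columns $i,j,k,s$; equivalently $(3)$ is the system of Grassmann--Plücker relations. For $(4)\Rightarrow(3)$ one writes a skew-symmetric matrix of rank $\le 2$ as $c_{ij}=u_iv_j-u_jv_i$ and checks that the relation vanishes identically. For the harder direction $(3)\Rightarrow(4)$, assuming $C\neq0$ I would fix $a<b$ with $c_{ab}\neq0$ and extract from $(3)$ the identity $c_{ab}c_{ks}=c_{ak}c_{bs}-c_{as}c_{bk}$; setting $u_k:=c_{ak}$ and $v_k:=c_{bk}$ this exhibits $c_{ks}=c_{ab}^{-1}(u_kv_s-u_sv_k)$, so $C$ is decomposable and $\rank(C)=2$.

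I expect the main obstacle to be $(3)\Leftrightarrow(4)$, and specifically the implication $(3)\Rightarrow(4)$. Steps $(1)$--$(3)$ reduce to bookkeeping with the Leibniz rule and a linear independence argument, but passing from the vanishing of all $4\times4$ Pfaffians to the bound $\rank(C)\le 2$ requires either invoking the classical fact that the rank of a skew-symmetric matrix equals twice the size of its largest nonzero principal Pfaffian minor, or carrying out the explicit reconstruction of $C$ from two columns sketched above; the latter is self-contained and is the route I would favor.
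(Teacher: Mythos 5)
Your proof is correct and follows essentially the same route as the paper: the same chain of equivalences, the same Jacobiator computation for $(2)\Leftrightarrow(3)$ (the paper states $(1)\Leftrightarrow(2)$ as well known, where you supply the derivation-in-each-slot argument), and the same recognition that condition $(3)$ is the vanishing of the $4\times4$ Pfaffians. The only real divergence is in $(3)\Leftrightarrow(4)$. For $(3)\Rightarrow(4)$ the paper derives the same reconstruction formula $c_{ij}=-c_{12}^{-1}(c_{j1}c_{i2}-c_{j2}c_{i1})$ that you do, but then verifies that every order-$3$ minor vanishes by a determinant factorization; you instead read the formula as $C=c_{ab}^{-1}(uv^{T}-vu^{T})$ and conclude $\rank(C)\le 2$ immediately from the sum of two rank-one matrices, which is a little cleaner. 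For $(4)\Rightarrow(3)$ the paper expands the order-$4$ principal minor as the square of the Pfaffian, whereas you assert that a skew-symmetric matrix of rank at most $2$ is decomposable as $c_{ij}=u_iv_j-u_jv_i$ and substitute; that decomposability is standard structure theory of alternating forms but is the one step you leave unjustified, and the paper's $\det=\mathrm{Pf}^2$ route avoids needing it. Either way the argument goes through.
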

\begin{proof}
(1)$\Longleftrightarrow$(2) is well-known since $A$ is a free Poisson algebra generated in degree one. And (2)$\Longleftrightarrow$(3) follows from the Jacobi identity, since
\begin{align*}
\{x_i,\{x_j,x_k\}\}+\{x_j,\{x_k,x_i\}\}+\{x_k,\{x_i,x_j\}\}&=\{x_i,c_{jk}\omega\}+\{x_j,c_{ki}\omega\}+\{x_k,c_{ij}\omega\}\\
&=\sum_{1\le s\le n} 2c_{jk}\{x_i,x_s\}x_s+2c_{ki}\{x_j,x_s\}x_s+2c_{ij}\{x_k,x_s\}x_s\\
&=\sum_{1\le s\le n} 2\left(c_{jk}c_{is}+c_{ki}c_{js}+c_{ij}c_{ks}\right)x_s.
\end{align*}
Because $C$ is skew-symmetric, it suffices to consider that $1\le i<j<k<s\le n$.

It remains to show that (3)$\Longleftrightarrow$(4). If $\rank(C)\le 2$, then any order $4$ principal minors of $C$ are $0$.  Hence
for any $1\le i<j<k<s\le n$, we have 
\[0=
\begin{vmatrix}
 0 & c_{ij} & c_{ik} & c_{is} \\
 -c_{ij} & 0 & c_{jk} & c_{js} \\
 -c_{ik} & -c_{jk} & 0 & c_{ks} \\
 -c_{is} & -c_{js} & -c_{ks} & 0 
\end{vmatrix}=(c_{ij}c_{ks}+c_{jk}c_{is}-c_{ik}c_{js})^2.
\]
So $c_{ij}c_{ks}+c_{jk}c_{is}+c_{ki}c_{js}=0$. Conversely, if (3) holds, we should prove that $\rank(C)\le 2$. Suppose all $c_{ij}=0$, then we are done since $\rank(C)=0$. Otherwise, we may suppose $c_{12}\neq 0$.
Then 
\[
\begin{vmatrix}
 0 & c_{12} \\
 -c_{12} & 0 \\
\end{vmatrix}
=c_{12}^2\neq 0,
\]
which is a non-zero minor of order $2$ of $C$. Hence $\mathrm{rank}(C)\ge 2$. On the other hand, we have
 $c_{ij}=-c_{12}^{-1}(c_{j1}c_{i2}-c_{j2}c_{i1})$ by (3) for all $1\le i,j\le n$. Let $\begin{vmatrix}
c_{ir} & c_{is} & c_{it} \\
c_{jr} & c_{js} & c_{jt} \\
c_{kr} & c_{ks} & c_{kt}
\end{vmatrix}$
be any minor of $C$ of order $3$. Then we have 
\[
\begin{vmatrix}
c_{ir} & c_{is} & c_{it} \\
c_{jr} & c_{js} & c_{jt} \\
c_{kr} & c_{ks} & c_{kt} 
\end{vmatrix}
= \frac{-1}{c_{12}^3}
\begin{vmatrix}
c_{r1}c_{i2}-c_{r2}c_{i1} & c_{s1}c_{i2}-c_{s2}c_{i2} & c_{t1}c_{i2}-c_{t2}c_{i1} \\
c_{r1}c_{j2}-c_{r2}c_{j1} & c_{s1}c_{j2}-c_{s2}c_{j2} & c_{t1}c_{j2}-c_{t2}c_{j1} \\
c_{r1}c_{k2}-c_{r2}c_{k1} & c_{s1}c_{k2}-c_{s2}c_{k2} & c_{t1}c_{k2}-c_{t2}c_{k1} 
\end{vmatrix}
=0.
\]
This implies that $\mathrm{rank}(C)=2$.
\end{proof}

\begin{lemma}\label{lem3-2}
The following are equivalent:
\begin{enumerate}
\item $A(C)$ and $A(D)$ are isomorphic as graded Poisson algebras;
\item $C$ and $D$ are orthogonally similar;
\item $C$ and $D$ are similar.
\end{enumerate}
\end{lemma}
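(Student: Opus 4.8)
The plan is to prove the chain of equivalences $(1)\Leftrightarrow(2)\Leftrightarrow(3)$ by going $(2)\Rightarrow(3)$ trivially, $(3)\Rightarrow(2)$ via the structure theory of skew-symmetric matrices over an algebraically closed field, and then closing the loop with $(2)\Rightarrow(1)$ and $(1)\Rightarrow(3)$. Since $(2)\Rightarrow(3)$ is immediate (orthogonal similarity is a special case of similarity), the first genuine task is the converse $(3)\Rightarrow(2)$. Here I would exploit that by Lemma \ref{lem3-1} both $C$ and $D$ have rank at most two, and that $\K$ is algebraically closed with $\Char\K\neq 2$. Two skew-symmetric matrices are similar if and only if they have the same rank (equivalently, the same elementary divisors, which for skew-symmetric matrices are governed by the rank since the nonzero part is a direct sum of $2\times 2$ hyperbolic blocks). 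The standard fact is that over such a field every skew-symmetric matrix is orthogonally equivalent to a canonical block-diagonal form; I would invoke the normal form for skew-symmetric matrices under orthogonal congruence and argue that, given equal rank, one can pass from similarity to orthogonal similarity. The main obstacle I anticipate is precisely this step: being careful about the distinction between congruence and similarity for skew-symmetric matrices, and verifying that the canonical form can be realized by an \emph{orthogonal} conjugation rather than merely a similarity or a congruence.

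The core of the argument rests on the following structural claim, which I would isolate as the key step: for $C$ of rank two, after an orthogonal change of basis $C$ is conjugate to a matrix supported on a single $2\times 2$ block of the form $\lambda\left(\begin{smallmatrix} 0 & 1\\ -1 & 0\end{smallmatrix}\right)$ (padded by zeros), for some scalar $\lambda$ determined by $C$ up to the relevant equivalence. I would then show that the similarity class and the orthogonal-similarity class coincide for these low-rank skew-symmetric matrices, so that $(3)$ (equal rank, forced by Lemma \ref{lem3-1}, together with matching of the single invariant $\lambda$) yields $(2)$. Concretely, the scalar invariant can be read off from the nonzero eigenvalues $\pm\lambda\sqrt{-1}$ of $C$, and similar matrices share eigenvalues, so one recovers orthogonal similarity from plain similarity.

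For the equivalence with condition $(1)$, I would translate an isomorphism of graded Poisson algebras into a linear statement. A graded Poisson algebra isomorphism $\varphi\colon A(C)\to A(D)$ must restrict to a linear isomorphism $T$ of the degree-one space $A_1=\bigoplus_i \K x_i$, and compatibility with the Poisson bracket forces a relation between $C$, $D$ and $T$. The subtlety is that the defining bracket $\{x_i,x_j\}=c_{ij}\omega$ involves the quadratic element $\omega=x_1^2+\cdots+x_n^2$, so I would compute how $\varphi$ transports $\omega$: one shows that $\varphi$ must preserve the (one-dimensional) span of the common value of the brackets, hence scale $\omega$, and that preserving $\omega$ up to scalar is exactly the condition that $T$ be a scalar multiple of an orthogonal transformation. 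Matching the bracket relations $\{Tx_i,Tx_j\}_D = \varphi\{x_i,x_j\}_C$ then yields $D = \mu\, T C T^{t}$ for an appropriate scalar $\mu$, which after absorbing scalars gives orthogonal similarity; this establishes $(1)\Rightarrow(2)$, and reversing the computation gives $(2)\Rightarrow(1)$.

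The step I expect to be the main obstacle is pinning down how a Poisson isomorphism interacts with the distinguished quadratic form $\omega$, i.e.\ proving that any graded Poisson isomorphism is forced to preserve $\omega$ up to a nonzero scalar and hence act orthogonally (up to scaling) on $A_1$. This is where the specific shape of the bracket is essential, and it is the place where one must argue rather than merely quote linear algebra: one has to show that the image of $\omega$ under the induced map on the symmetric square is again (a scalar multiple of) $\omega$, using that the brackets $\{x_i,x_j\}$ all lie in the line $\K\omega$ and that $\varphi$ is a Poisson map. Once orthogonality is secured, the passage between similarity and orthogonal similarity for rank-$\le 2$ skew-symmetric matrices, carried out in the first two paragraphs, completes the cycle.
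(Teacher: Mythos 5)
Your treatment of $(1)\Leftrightarrow(2)$ is essentially the paper's argument: a graded isomorphism $\phi$ is given by a matrix $M\in GL(n)$, compatibility with the bracket forces $\phi(x_1^2+\cdots+x_n^2)=\lambda(y_1^2+\cdots+y_n^2)$, which is equivalent to $M^TM=\lambda\,\mathrm{Id}$, and matching the brackets gives $MDM^T=\lambda C$; rescaling $M$ by $1/\sqrt{\lambda}$ then yields orthogonal similarity. (Both you and the paper silently assume $C\neq 0$ so that $\omega$ actually occurs as a bracket value, but the case $C=D=0$ is trivial.) This part is fine and coincides with the paper.

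The genuine gap is in your argument for $(3)\Rightarrow(2)$. Your key structural claim --- that every rank-two skew-symmetric matrix over $\K$ is orthogonally similar to a single block $\lambda\left(\begin{smallmatrix}0&1\\-1&0\end{smallmatrix}\right)$ padded by zeros --- is false precisely because $\K$ is algebraically closed: the form $x_1^2+\cdots+x_n^2$ is then isotropic and there exist nonzero \emph{nilpotent} skew-symmetric matrices of rank two. These are exactly the ``discrete class'' in the proposition following this lemma, whose orthogonal normal form is supported on a $4\times 4$ block; no matrix of your claimed shape can represent it, since for $\lambda\neq 0$ that shape has nonzero eigenvalues $\pm\lambda\sqrt{-1}$ and hence is not nilpotent. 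Relatedly, your assertion that similarity of skew-symmetric matrices is detected by rank alone is wrong ($\left(\begin{smallmatrix}0&1\\-1&0\end{smallmatrix}\right)$ and $\left(\begin{smallmatrix}0&2\\-2&0\end{smallmatrix}\right)$ have equal rank but different spectra), and even rank plus spectrum does not immediately give orthogonal similarity in the isotropic setting. The equivalence $(2)\Leftrightarrow(3)$ is a genuinely nontrivial theorem; the paper does not prove it but quotes \cite[Theorem 2.1]{DRZ:07}, which asserts that skew-symmetric matrices over an algebraically closed field of characteristic $\neq 2$ are orthogonally similar if and only if they are similar. A self-contained proof along your lines would have to treat the nilpotent rank-two case separately rather than reduce everything to the hyperbolic block.
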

\begin{proof}
Let $\phi: A(C)\to A(D)$ be a graded Poisson isomorphism. Hence $\phi$ is given by some $n\times n$-matrix $M=(m_{ij})\in GL(n)$, since it is a graded algebra map. We denote $A(D)=\K[y_1,y_2,\cdots,y_n]$. Then $\phi(x_i)=\sum_{1\le j\le n}m_{ij}y_j$ for all $x_i$. Note that $\phi$ preserves the Poisson bracket, so we have
\begin{align}\label{f1}
\phi\left(x_1^2+x_2^2+\cdots+x_n^2\right)=\lambda\left(y_1^2+y_2^2+\cdots+y_n^2\right),
\end{align}
for some $\lambda\in \K^\times$. Direct computation shows that \eqref{f1} is equivalent to the condition $M^TM=\lambda Id$. Moreover, we have 
\begin{align}\label{f2}
\phi\left(\{x_i,x_j\}\right)=\left\{\phi(x_i),\phi(x_j)\right\}, 
\end{align} 
for all $x_i,x_j$. Then LHS of \eqref{f2} equals $\phi(c_{ij}(x_1^2+x_2^2+\cdots+x_n^2))=\lambda c_{ij}(y_1^2+y_2^2+\cdots+y_n^2)$. Meanwhile,
\begin{align*}
RHS\ of\ \eqref{f2}=&\{\sum_{1\le p\le n}m_{ip}y_p,\sum_{1\le q\le n}m_{jq}y_q\}\\
=&\sum_{1\le p,q\le n}m_{ip}m_{jq}\{y_p,y_q\}\\
=&\sum_{1\le p,q\le n}m_{ip}d_{pq}m_{jq}(y_1^2+y_2^2+\cdots+y_n^2).
\end{align*}
Hence \eqref{f2} is equivalent to the condition $MDM^T=\lambda C$. And (1) $\Longleftrightarrow$(2) follows by scaling $M$ by $1/\sqrt{\lambda}$.  Note that (2) $\Longleftrightarrow$(3) comes from \cite[Theorem 2.1]{DRZ:07}.
\end{proof}

\begin{proposition}
The graded Poisson algebra $A(C)$ is isomorphic to one of the following: 
\begin{enumerate}
\item The parametric family $A(a)$ with coefficient matrix:
$\begin{pmatrix}
0&\dots &\dots   &\dots  &0 \\
\vdots  &\ddots &&  & \\
\vdots& & \ddots&  & \\
\vdots& & &0  &a \\
0& & &  -a&0 
\end{pmatrix}$, where $a\in \K$ is the parameter;
\item the discrete class with coefficient matrix:
$\begin{pmatrix}
0&\dots&\dots&\dots&0\\
\vdots&0&-1   &i  &0 \\
\vdots  &1 &0&0  &-i \\
\vdots& -i& 0&  0&-1 \\
0& 0& i&1  &0
\end{pmatrix}$.
\end{enumerate}
Moreover, $A(a)\cong A(a')$ if and only if $a=\pm a'$. 
\end{proposition}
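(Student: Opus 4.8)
The plan is to combine Lemmas \ref{lem3-1} and \ref{lem3-2} to turn the statement into a normal-form problem for skew-symmetric matrices, and then to treat the nondegenerate and degenerate rank-two cases separately. First I would use Lemma \ref{lem3-1} to record that $A(C)$ is a graded Poisson algebra exactly when $\rank(C)\le 2$, and Lemma \ref{lem3-2} to identify the isomorphism classes of such $A(C)$ with the orthogonal-similarity (equivalently, similarity) classes of skew-symmetric $C$ with $\rank(C)\le 2$. Since $C$ is skew-symmetric its rank is even, so either $C=0$ or $\rank(C)=2$; the case $C=0$ is the member $A(0)$ of the parametric family.

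For $\rank(C)=2$ I would write $C=uv^{\mathrm{T}}-vu^{\mathrm{T}}$ with $u,v$ linearly independent, so that $\IM(C)=\mathrm{span}(u,v)$, and introduce the Gram determinant $G=\langle u,u\rangle\langle v,v\rangle-\langle u,v\rangle^{2}$ of $u,v$ with respect to the standard symmetric form $\sum_k x_k^2$. A direct computation of $C$ on the basis $\{u,v\}$ shows that the nonzero eigenvalues of $C$ are $\pm\sqrt{-G}$, so $G$ is a similarity invariant, and it is the quantity that separates the two items of the list. In the nondegenerate case $G\neq 0$ the plane $\mathrm{span}(u,v)$ carries a nondegenerate restriction of the form, so I would rescale $u,v$ to an orthonormal pair and extend to an orthonormal basis of $\K^n$; in that basis $C$ becomes $\left(\begin{smallmatrix}0&a\\-a&0\end{smallmatrix}\right)\oplus 0$ with $a^2=G$, which is precisely $A(a)$. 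For the "moreover", since $A(a)\cong A(a')$ means $C_a\sim C_{a'}$ and the similarity class of a semisimple matrix is its eigenvalue multiset $\{ia,-ia\}$, one obtains $A(a)\cong A(a')$ iff $\{ia,-ia\}=\{ia',-ia'\}$, i.e.\ iff $a=\pm a'$.

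The degenerate case $G=0$ with $C\neq 0$ is where the main obstacle lies. Here $\mathrm{span}(u,v)$ is degenerate (indeed isotropic) for the standard form, so one cannot diagonalize and the eigenvalue invariant is useless because $C$ is now nilpotent. The plan is to invoke the orthogonal canonical form of \cite[Theorem 2.1]{DRZ:07} to show that every such $C$ is orthogonally similar to the exhibited $4\times 4$ nilpotent block padded by zeros, i.e.\ the discrete class. The hard part will be to produce an explicit orthogonal (not merely invertible) change of basis carrying $C$ to this block and, above all, to verify that the degenerate rank-two orbit is a single orbit; this is exactly the step where the algebraically closed hypothesis on $\K$, the assumption $\Char\K\neq 2$, and the precise interplay between the fixed symmetric form and the skew form $C$ must be used with care.

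Finally I would close the argument by checking the two items are genuinely disjoint and exhaust everything: the discrete class is nilpotent of rank two (all eigenvalues $0$), whereas each $A(a)$ is either zero or semisimple, so no isomorphism can identify them, and by the trichotomy $C=0$, $G\neq0$, $G=0\neq C$ every admissible $C$ falls into one of the two families. The only delicate ingredient in this last bookkeeping is again the uniqueness of the degenerate orbit from the previous paragraph, which is why that step is the crux of the proof.
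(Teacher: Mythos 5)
Your proposal is correct and follows essentially the same route as the paper: Lemmas \ref{lem3-1} and \ref{lem3-2} reduce the classification to orthogonal-similarity normal forms of skew-symmetric matrices of rank at most two, the normal forms are taken from \cite{DRZ:07}, and the ``moreover'' is settled by comparing eigenvalues. The only difference is that you treat the nondegenerate case ($G\neq 0$) by hand and defer only the uniqueness of the degenerate rank-two orbit to \cite{DRZ:07}, whereas the paper cites \cite[Theorem 2.5]{DRZ:07} for the entire list of normal forms.
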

\begin{proof}
By Lemma \ref{lem3-1} and Lemma \ref{lem3-2}, it suffices to find normal forms for orthogonal similarity classes of skew-symmetric matrices, which have rank $\le 2$. Hence it follows from \cite[Theorem 2.5]{DRZ:07}. Additionally, $A(a)\cong A(a')$ if and only if the two matrices $\begin{pmatrix} 0 &a\\ -a&0\end{pmatrix}$ and $\begin{pmatrix} 0 &a'\\ -a'&0\end{pmatrix}$ are similar by Lemma \ref{lem3-2}. Then the result follows by computing the eigenvalues.
\end{proof}
Denote by $R=\K[y_1,y_2,\cdots,y_{n-1}]$ the free Poisson algebra with trivial Poisson bracket. Define the Poisson-Ore extension $R[y_n;\alpha,\delta]_P$, subject to 
\begin{align*}
\alpha=2aiy_{n-1}\frac{\partial}{\partial y_{n-1}},\quad \delta=2ai(y_1^2+\cdots+y_{n-2}^2)\frac{\partial}{\partial y_{n-1}}.
\end{align*}

\begin{proposition}
The parametric family $A(a)$ is isomorphic to $R[y_n;\alpha,\delta]_P$. Moreover, the universal enveloping algebra of $A(a)$ is a length two iterated Ore extension of the polynomial algebra on $2n-2$ generators, and it is a 
Noetherian, Artin-Schelter regular and Koszul domain.
\end{proposition}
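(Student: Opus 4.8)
The plan is to establish the two assertions separately: the isomorphism $A(a)\cong R[y_n;\alpha,\delta]_P$ by an explicit linear change of coordinates, and the structural statements by feeding the result into Theorem \ref{Thm:1} and Corollary \ref{Cor:1}.

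For the isomorphism I would exploit that $\K$ is algebraically closed with $\Char\K\neq 2$, so $i=\sqrt{-1}\in\K$ and the relevant sum of squares splits off a hyperbolic pair. Concretely, set $y_j=x_j$ for $1\le j\le n-2$ and introduce the isotropic coordinates $y_{n-1}=x_{n-1}+ix_n$ and $y_n=x_{n-1}-ix_n$; this is an invertible linear substitution, hence a graded algebra isomorphism $\K[x_1,\ldots,x_n]\to\K[y_1,\ldots,y_n]$. The one real computation is to transport the Poisson bracket. Using $x_{n-1}^2+x_n^2=y_{n-1}y_n$, so that $x_1^2+\cdots+x_n^2=(y_1^2+\cdots+y_{n-2}^2)+y_{n-1}y_n$, together with the fact that every bracket of $A(a)$ vanishes except $\{x_{n-1},x_n\}=a(x_1^2+\cdots+x_n^2)$, a direct expansion gives $\{y_{n-1},y_j\}=\{y_n,y_j\}=0$ for $j\le n-2$ and
\[
\{y_n,y_{n-1}\}=2ai\,y_{n-1}y_n+2ai\bigl(y_1^2+\cdots+y_{n-2}^2\bigr).
\]
This is precisely $\alpha(y_{n-1})\,y_n+\delta(y_{n-1})$ for the prescribed Poisson derivation $\alpha$ and Poisson $\alpha$-derivation $\delta$, since $\alpha(y_{n-1})=2ai\,y_{n-1}$, $\delta(y_{n-1})=2ai(y_1^2+\cdots+y_{n-2}^2)$, and both annihilate $y_1,\ldots,y_{n-2}$. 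Hence the two Poisson structures agree, proving $A(a)\cong R[y_n;\alpha,\delta]_P$.

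For the enveloping algebra I would first record that $R=\K[y_1,\ldots,y_{n-1}]$ carries the trivial Poisson bracket, so the defining relations of $R^e$ collapse: the elements $h_{y_i}$ commute with one another and with all $m_{y_j}$, whence $R^e$ is the commutative polynomial algebra on the $2n-2$ generators $m_{y_1},\ldots,m_{y_{n-1}},h_{y_1},\ldots,h_{y_{n-1}}$, as already noted for free Poisson algebras. Applying Theorem \ref{Thm:1}(1) to the Poisson-Ore extension $A(a)=R[y_n;\alpha,\delta]_P$ then shows that $A(a)^e$ is a length two iterated Ore extension of $R^e$, i.e. of the polynomial algebra on $2n-2$ generators. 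The ring-theoretic assertions follow from Corollary \ref{Cor:1}: the polynomial algebra on $2n-2$ generators is a Noetherian, Artin-Schelter regular, Koszul domain, and these are preserved under graded iterated Poisson-Ore extensions, the extension being quadratic because the bracket \eqref{e1} is homogeneous of degree two in the generators.

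The only genuinely computational ingredient is the bracket identity in the second paragraph, and the single point requiring insight is guessing the correct substitution, namely the isotropic pairing $x_{n-1}\pm ix_n$ that turns the invariant quadratic form into the product $y_{n-1}y_n$. Once that substitution is in hand the verification is routine, so I anticipate no substantive obstacle; the remainder is a direct appeal to the already-established Theorem \ref{Thm:1} and Corollary \ref{Cor:1}.
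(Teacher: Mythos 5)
Your proposal is correct and follows essentially the same route as the paper: the identical substitution $y_{n-1}=x_{n-1}+ix_n$, $y_n=x_{n-1}-ix_n$ turning the quadratic form into $y_1^2+\cdots+y_{n-2}^2+y_{n-1}y_n$, the observation that $R^e$ is the polynomial ring on $2n-2$ generators, and the appeal to Theorem \ref{Thm:1} and Corollary \ref{Cor:1}. You merely spell out the bracket computation that the paper leaves as ``easy to check,'' and your verification is accurate.
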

\begin{proof}
In $A(a)$, we make a linear transformation such that $y_i=x_i$ for all $1\le i\le n-2$ and $y_{n-1}=x_{n-1}+ix_n$ and $y_n=x_{n-1}-ix_n$. Then it is easy to check that it is isomorphic to the described Poisson-Ore extension. Note that $R^e$ is the polynomial algebra on $2n-2$ generators by the construction of (R1). Then the remaining of the statement follows from Theorem \ref{Thm:1} and Corollary \ref{Cor:1}.
\end{proof}

\providecommand{\bysame}{\leavevmode\hbox to3em{\hrulefill}\thinspace}

\end{document}